\documentclass[reqno]{amsart}

\usepackage{amsmath,amssymb,amsthm,amsfonts,mathtools}
\usepackage{mathrsfs}
\usepackage{xcolor}
\usepackage{hyperref}
\usepackage[noabbrev]{cleveref}

\makeatletter
\@namedef{subjclassname@2020}{%
    \textup{2020} Mathematics Subject Classification}
\makeatother

\newcommand{\R}{\mathbb R}
\newcommand{\dd}{\,d}
\newcommand{\calC}{\mathcal C}

\newcommand{\calF}{\mathcal F}
\newcommand{\calT}{\mathcal T}

\DeclareSymbolFont{largesymbolsA}{U}{esint}{m}{n}
\DeclareMathSymbol{\fintop}{\mathop}{largesymbolsA}{'037}
\newcommand{\fint}{\fintop\nolimits}

\theoremstyle{plain}
\newtheorem{theorem}{Theorem}[section]
\newtheorem{lemma}[theorem]{Lemma}
\newtheorem{proposition}[theorem]{Proposition}
\newtheorem{corollary}[theorem]{Corollary}

\theoremstyle{definition}
\newtheorem{definition}[theorem]{Definition}

\theoremstyle{remark}
\newtheorem{remark}[theorem]{Remark}

\numberwithin{equation}{section}

\crefname{theorem}{Theorem}{Theorems}
\Crefname{theorem}{Theorem}{Theorems}
\crefname{lemma}{Lemma}{Lemmas}
\Crefname{lemma}{Lemma}{Lemmas}
\crefname{proposition}{Proposition}{Propositions}
\Crefname{proposition}{Proposition}{Propositions}
\crefname{corollary}{Corollary}{Corollaries}
\Crefname{corollary}{Corollary}{Corollaries}
\crefname{definition}{Definition}{Definitions}
\Crefname{definition}{Definition}{Definitions}
\crefname{assumption}{Assumption}{Assumptions}
\Crefname{assumption}{Assumption}{Assumptions}
\crefname{remark}{Remark}{Remarks}
\Crefname{remark}{Remark}{Remarks}
\crefname{section}{Section}{Sections}
\Crefname{section}{Section}{Sections}

\crefformat{section}{Section~#2#1#3}
\Crefformat{section}{Section~#2#1#3}
\crefformat{subsection}{Section~#2#1#3}
\Crefformat{subsection}{Section~#2#1#3}
\crefformat{theorem}{Theorem~#2#1#3}
\Crefformat{theorem}{Theorem~#2#1#3}
\crefformat{lemma}{Lemma~#2#1#3}
\Crefformat{lemma}{Lemma~#2#1#3}
\crefformat{proposition}{Proposition~#2#1#3}
\Crefformat{proposition}{Proposition~#2#1#3}
\crefformat{corollary}{Corollary~#2#1#3}
\Crefformat{corollary}{Corollary~#2#1#3}
\crefformat{definition}{Definition~#2#1#3}
\Crefformat{definition}{Definition~#2#1#3}
\crefformat{assumption}{Assumption~#2#1#3}
\Crefformat{assumption}{Assumption~#2#1#3}
\crefformat{remark}{Remark~#2#1#3}
\Crefformat{remark}{Remark~#2#1#3}

\title[Harnack estimates for nonlocal double phase functionals]
{Harnack-type estimates for nonlocal double phase functionals}

\author[Y. Fang and C. Zhang \hfil \hfilneg]{Yuzhou Fang and Chao Zhang$^*$}

\thanks{*Corresponding author}

\address{Yuzhou Fang \hfill\break School of Mathematics, Harbin Institute of Technology, Harbin 150001, China}
\email{18b912036@hit.edu.cn}

\address{Chao Zhang \hfill\break School of Mathematics and Institute for Advanced Study in Mathematics, Harbin Institute of Technology, Harbin 150001, China}
\email{czhangmath@hit.edu.cn}

\subjclass[2020]{35B45; 35B65; 35J60; 35R11; 47G20}
\keywords{Local upper estimates; Harnack-type estimates; nonlocal double phase
functionals; intrinsic tails}
\hypersetup{
    colorlinks=false,
    pdfborder={0 0 1},
    linkbordercolor={1 0 0},
    citebordercolor={0 1 0},
    urlbordercolor={0 1 1},
    pdftitle={Harnack-type estimates for nonlocal double phase functionals},
    pdfauthor={Yuzhou Fang and Chao Zhang},
    pdfsubject={Regularity theory for nonlocal double phase variational problems},
    pdfcreator={LaTeX with hyperref},
    pdfkeywords={Local upper estimates; Harnack-type estimates; nonlocal double phase functionals; intrinsic tails}
}

\begin{document}

\begin{abstract}
We establish Harnack-type inequalities for local minimizers of nonlocal double
phase functionals, involving an explicit two-radius positive-part tail and a
negative far-field tail. The key difficulty is the mismatch between the
natural \((p,q)\)-growth scales in the available modular supremum estimate and
the low integrability exponent provided by the weak Harnack inequality. To
overcome this obstacle, we establish a concentric modular estimate and derive
from it a local upper estimate valid for every integrability exponent. To the
best of our knowledge, our results provide the first Harnack-type theory for
such nonlocal double phase functionals under the natural structural
conditions.
\end{abstract}

\maketitle

\section{Introduction}

Let \(\Omega\subset\R^n\), \(n\ge2\), be open and bounded. In this paper, we
study local minimizers of the nonlocal double phase functional
\begin{equation}\label{main}
    \calF(u;\Omega)
    :=\iint_{\calC_\Omega}\left(|u(x)-u(y)|^p K_{sp}(x,y)+ a(x,y)|u(x)-u(y)|^q K_{tq}(x,y)
    \right)\dd x\!\dd y ,
\end{equation}
where $a(x,y)\ge0$,
\[
  \calC_\Omega=(\R^n\times\R^n)\setminus((\R^n\setminus\Omega)\times(\R^n\setminus\Omega)).
\]
The kernels \(K_{sp}\) and \(K_{tq}\) are symmetric and uniformly comparable
to \(|x-y|^{-n-sp}\) and \(|x-y|^{-n-tq}\), respectively. When \(a\equiv0\),
the functional reduces to a fractional \(p\)-type energy, whereas a nonzero
coefficient couples the two fractional phases. Since \(a\) may vanish, the
ellipticity can change dramatically with position, creating nontrivial challenges that
are absent from the fractional \(p\)-Laplacian analogue.

We impose the following structural assumptions throughout the paper. The
exponents satisfy
\begin{equation}\label{eq:exponents}
    0<s,t<1,\qquad 1<p\le q<\infty.
\end{equation}
The fractional kernels are assumed to meet the standard ellipticity bounds
\begin{equation}\label{eq:kernel-bounds}
    \frac{\Lambda^{-1}}{|x-y|^{n+sp}}
    \le K_{sp}(x,y)\le
    \frac{\Lambda}{|x-y|^{n+sp}},\quad
    \frac{\Lambda^{-1}}{|x-y|^{n+tq}}
    \le K_{tq}(x,y)\le
    \frac{\Lambda}{|x-y|^{n+tq}},
\end{equation}
for some \(\Lambda\ge1\), and the measurable coefficient $a:\mathbb{R}^n\times\mathbb{R}^n\rightarrow\mathbb{R}$ satisfies
\begin{equation}\label{eq:a-basic}
  a\in L^\infty_{\rm loc}(\Omega\times\Omega) \ \ \text{and}\ \  0\le a(x,y)=a(y,x)
    \quad\text{for a.e. }(x,y)\in\R^n\times\R^n.
\end{equation}
For the local upper and Harnack-type estimates, we further impose
\begin{equation}\label{eq:a-regular}
\begin{cases}
|a(x_1,y_1)-a(x_2,y_2)|\le [a]_\alpha(|x_1-x_2|+|y_1-y_2|)^\alpha, \quad &\text{if } s\le t,  \\[2mm]
|a(x,y)|\le M:=\|a\|_{L^\infty}, \quad &\text{if } s>t.
\end{cases}
\end{equation}
where, in the case \(s\le t\), \(\alpha\in(0,1]\) and the first inequality is
imposed on \(\R^n\times\R^n\). We also require the Sobolev admissibility condition
\begin{equation}\label{eq:sobolev-range}
    q\le\frac{np}{n-sp}
    \quad\text{if }sp<n,
    \qquad
    p\le q<\infty
    \quad\text{if }sp\ge n.
\end{equation}
This range is contained in the one for which local boundedness of minimizers
follows from~\cite[Theorem~2.9]{FKZ26}; see \Cref{lem2-1} below.

The regularity theory for nonlocal problems has developed rapidly over the
past two decades. For the fractional Laplace equation, Caffarelli and
Silvestre~\cite{CS07} established the Harnack inequality through the extension
method; for the parabolic setting, see~\cite{Str19,KW24}. In the nonlinear
setting, Di Castro, Kuusi and Palatucci~\cite{DKP14,DKP16} extended the
De Giorgi--Nash--Moser theory to the fractional \(p\)-Laplacian and obtained
H\"older continuity, weak Harnack inequalities, and Harnack inequalities. A
key ingredient in their analysis is a logarithmic estimate adapted to the
nonlocal setting.


More recently, these regularity theories have also been extended to nonlocal problems with
nonstandard growth. For nonlocal \(G\)-Laplace equations with Orlicz growth,
Byun, Kim and Ok~\cite{BKO} studied local boundedness and H\"older continuity
by adapting the techniques developed in~\cite{DKP16}; see also~\cite{CKW22}
for an alternative approach. Based on weak Harnack estimates and local boundedness of subsolutions, Fang and Zhang \cite{FZ23} further obtained a Harnack-type inequality for such equations, which was subsequently refined in \cite{CKW23,BKS23}, including the analysis of the stability of the full Harnack estimate as \(s\to1\). 

Among nonlocal problems with nonstandard growth, nonlocal double phase
functionals form a particularly challenging class due to the sharp interaction between different growth powers and fractional differentiability orders.
Double phase energies originate in the variational theory of functionals with
nonstandard growth, and their local regularity theory, including Harnack-type
estimates, has been explored extensively; see, for
instance,~\cite{BCM15,CM15a,CM15b}. For the nonlocal counterpart, De Filippis
and Palatucci~\cite{DP19} initiated the study of these problems in the
viscosity framework, while a self-improving property for bounded weak
solutions was later established in~\cite{SM22}. The relation between weak and
viscosity solutions was further investigated in~\cite{FZ23,GLZ26}. These
results concern the case \(s\ge t\), where the \(q\)-phase acts as a
lower-order perturbation. The complementary range \(s\le t\), where the
higher-growth phase is associated with the higher differentiability order, is
substantially more delicate. Byun, Ok and Song~\cite{BOS22} obtained local
H\"older continuity for minimizers and weak solutions of~\eqref{main} under
the assumption \(0\le a\in L^\infty(\R^n\times\R^n)\) and the balance
condition \(tq\le sp+\alpha\).

For nonlocal double phase equations with a strictly positive bounded
coefficient in the regime \(sp\ge tq\), a full nonlocal Harnack inequality was
obtained in~\cite{Kim25}. The strict positivity assumption, however, excludes
the possibly vanishing coefficients considered here. Related mixed-order
equations with two possibly degenerate coefficients were studied recently by
Lee, Ok and Song~\cite{LOS26}. Their operator combines two fractional phases
of different differentiability orders but with the same growth exponent
\(p\); they obtained regularity results and, under the assumption that one
coefficient is identically one, a Harnack inequality.

The present problem is structurally different: it permits genuine double
phase growth \(p<q\) and \(s\le t\), while also covering the equal-growth case
\(p=q\), and requires modular estimates adapted simultaneously to distinct
growth exponents and differentiability orders. Weak Harnack inequalities for
this class were established in~\cite{FKZ26}.

Despite these developments, no Harnack-type estimate appears to be available
in the genuinely nonstandard-growth case \(p<q\) for the nonlocal
functional~\eqref{main} when the modulating coefficient may vanish. The
obstruction is that the modular supremum estimate
in~\cite[Theorem~2.10]{FKZ26} is expressed at the natural \(p\)- and
\(q\)-growth scales, whereas the weak Harnack theorem provides only a
generally small integrability exponent. These estimates therefore cannot be
combined directly. What is needed is an upper estimate of
\(L^\ell\)-to-\(L^\infty\) form, valid for every \(\ell>0\), that retains the
exterior contribution at the radius required by the Harnack argument.

The main novelty of this paper is a scale-compatible procedure that closes this gap.  We refine the modular supremum argument of~\cite[Theorem~2.10]{FKZ26} to obtain a low-integrability local upper estimate for every \(\ell>0\).  The proof keeps the exterior integration radius fixed, produces an absorbable supremum on a larger ball through modular interpolation, and closes the resulting recursion by a hole-filling argument.  The resulting flexibility permits the upper estimate to be applied with the small exponent supplied by the weak Harnack
inequality.  The iteration also identifies the two-radius intrinsic tail \(\calT_{\rho,R}\): \(\rho\) determines where the exterior interaction begins, while \(R\) fixes the coefficient and normalization scales in the intrinsic modular.  This separation preserves the scales required by the upper and weak Harnack estimates.  The two radii decouple the exterior cutoff scale from the intrinsic reference scale; they do not separate the \(p\)- and \(q\)-phases.

Before stating the main results, we introduce the intrinsic scale functions.
For \(\tau\ge0\), define
\begin{equation*}
h_r(\tau):=\frac{\tau^{p-1}}{r^{sp}}
    +a^-_r\frac{\tau^{q-1}}{r^{tq}}
\end{equation*}
and
\begin{equation*}
H_r(\tau):=\frac{\tau^p}{r^{sp}}
    +a^-_r\frac{\tau^q}{r^{tq}}
\end{equation*}
with
\begin{equation*}
a^-_r:=\inf_{B_r(x_0)\times B_r(x_0)} a(x,y).
\end{equation*}
We denote their inverses by \((H_r)^{-1}\) and \((h_r)^{-1}\),
respectively. Because the coefficient \(a\) depends on both variables, we use
the weighted tail
\[
\mathrm{Tail}_a(v;x_0,r,R)
:=\sup_{x\in B_R(x_0)}
\int_{\R^n\setminus B_r(x_0)}
a(x,y)\frac{|v(y)|^{q-1}}{|y-x_0|^{n+tq}}\dd y.
\]
The unweighted \(p\)-tail is defined by
\[
\mathrm{Tail}(v;x_0,r)
:=\int_{\R^n\setminus B_r(x_0)}
\frac{|v(y)|^{p-1}}{|y-x_0|^{n+sp}}\dd y.
\]
Set
\begin{align}\label{eq:two-radius-tail}
    \calT_{r,R}(v;x_0)
    :=\mathrm{Tail}(v;x_0,r)+\mathrm{Tail}_a(v;x_0,r,R).
\end{align}
We abbreviate \(\calT_R(v;x_0):=\calT_{R,R}(v;x_0)\).

We now state the local upper estimate that provides the missing ingredient for
combining the modular supremum estimate with the weak Harnack inequality.

\begin{theorem}[Local upper estimate]\label{thm1}
Assume that \eqref{eq:exponents}--\eqref{eq:sobolev-range} hold and
\begin{align*}
\begin{cases}
tq \leq sp+\alpha,  \quad &\text{for  } s\le t,\\[2mm]
tq\le sp,        \quad&\text{for  } s>t.
\end{cases}
\end{align*}
Let \(0<R\le1\),  \(B_R(x_0)\Subset\Omega\), and let
\(u\in\mathcal{A}(\Omega)\cap L^{p-1}_{sp}(\R^n)
\cap L^{q-1}_{a,tq}(\Omega,\R^n)\) be a local minimizer
of~\eqref{main} in \(\Omega\). Then, for every \(\ell>0\), there exists a
constant \(C_\ell\ge1\), depending only on \(\ell\) and
\(\mathfrak D_u(B_R(x_0))\), such that
\begin{equation}\label{eq:local-bound}
    \sup_{B_{R/2}(x_0)}u\le C_\ell\left(\fint_{B_R(x_0)}u_+^\ell\dd x\right)^{1/\ell}
    +C_\ell\,(h_R)^{-1}\bigl(\calT_{R/2,R}(u_+;x_0)\bigr).
\end{equation}
\end{theorem}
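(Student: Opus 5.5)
The plan is to derive \eqref{eq:local-bound} from a concentric version of the modular supremum estimate of \cite[Theorem~2.10]{FKZ26}, then reduce the integrability exponent by interpolation and hole filling.

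\textbf{Step 1: concentric modular estimate.} For radii \(R/2\le\sigma'<\sigma\le R\) and all \(k\ge0\), I will establish
\[
H_R\Bigl(\sup_{B_{\sigma'}}(u-k)_+\Bigr)\le C\Bigl(\tfrac{\sigma}{\sigma-\sigma'}\Bigr)^{\gamma}\fint_{B_\sigma}H_R\bigl((u-k)_+\bigr)\dd x+C\,\sup_{B_{\sigma}}(u-k)_+\,\calT_{R/2,R}(u_+;x_0).
\]
The proof reruns the De Giorgi iteration of \cite[Theorem~2.10]{FKZ26} between \(B_{\sigma'}\) and \(B_\sigma\), with levels \(k_j\uparrow 2k\) and radii \(\sigma_j\downarrow\sigma'\). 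I will use the Caccioppoli inequality with tail. The coefficient \(a\) is handled as follows.
\begin{itemize}
\item For \(s\le t\), \(a(x,y)\) is compared with \(a^-_R\) using \eqref{eq:a-regular}. The resulting error \(R^\alpha\) is absorbed by the balance condition \(tq\le sp+\alpha\), since \(R^{\alpha}/R^{tq}\lesssim 1/R^{sp}\).
\item For \(s>t\), the \(q\)-phase is dominated by the \(p\)-phase using \(tq\le sp\) and boundedness.
\end{itemize}
The constants depend on the data \(\mathfrak D_u(B_R(x_0))\), including \(\|u\|_{L^\infty(B_R)}\) from \Cref{lem2-1}. The tail term is estimated by splitting \(\R^n\setminus B_{\sigma_j}\) into \(B_R\setminus B_{\sigma_j}\), which is controlled by \(\sup_{B_\sigma}(u-k)_+\) times a geometric factor, and \(\R^n\setminus B_{R/2}\). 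Since \(|y-x|\simeq |y-x_0|\) there, the latter gives exactly \(\calT_{R/2,R}\); the supremum over \(x\in B_R\) in \(\mathrm{Tail}_a\) covers the \(a(x,y)\)-dependence. Young's inequality in the form \(\tau\,h_R(\lambda)\le \varepsilon H_R(\tau)+C_\varepsilon H_R(\lambda)\) with \(\lambda=(h_R)^{-1}(\calT)\) then converts the tail term into \(\varepsilon H_R(\sup_{B_\sigma}u_+)+C H_R(\lambda)\).

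\textbf{Step 2: interpolation to small \(\ell\).} Write \(S(\sigma):=\sup_{B_\sigma}u_+\). If \(\ell\ge q\), Jensen's inequality gives the claim directly. Otherwise, bound \(H_R(u_+)\le u_+^\ell\,S(\sigma)^{p-\ell}R^{-sp}+a^-_R\,u_+^\ell\,S(\sigma)^{q-\ell}R^{-tq}\), treating the case \(\ell>p\) analogously. Applying \(H_R\)-Young, i.e.\ using the \(\Delta_2\)-type bounds \(H_R(\theta\tau)\le\theta^pH_R(\tau)\) for \(\theta\le1\) and \(H_R(\theta\tau)\le\theta^q H_R(\tau)\) for \(\theta\ge1\), should yield
\[
S(\sigma')\le \tfrac12 S(\sigma)+C\Bigl(\tfrac{R}{\sigma-\sigma'}\Bigr)^{\gamma'}\Bigl(\fint_{B_R}u_+^\ell\dd x\Bigr)^{1/\ell}+C\,(h_R)^{-1}\bigl(\calT_{R/2,R}(u_+;x_0)\bigr).
\]
This works because \(H_R^{-1}\) inverts the modular and the factor \(\theta=\bigl(\fint u_+^\ell/S^\ell\bigr)\) enters with powers between \(1/q\) and \(1/p\).

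\textbf{Step 3: closing the recursion.} The standard iteration lemma applied on \([R/2,R]\) absorbs \(\tfrac12 S(\sigma)\) and gives \eqref{eq:local-bound}; this is the hole-filling step. Keeping the exterior radius fixed at \(R/2\), rather than letting it shrink to \(\sigma_j\), is what makes the tail term independent of \(\sigma\), so that the recursion closes.

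The main obstacle is Step 2. Unlike the \(p\)-Laplacian case, the homogeneity of \(H_R\) varies between \(p\) and \(q\), so the interpolation must be carried out at the modular level. Obtaining a contraction factor \(<1\) uniformly requires splitting into the cases \(S\) large or small relative to the \(L^\ell\)-average. I would first attempt this with \(\varepsilon\)-Young on each phase separately, taking the maximum of the resulting exponents.
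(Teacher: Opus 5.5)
Your plan is correct and is essentially the paper's proof: a concentric De Giorgi estimate between $B_{\sigma'}$ and $B_\sigma$ with the exterior cutoff frozen at $R/2$ and the intrinsic scale frozen at $R$ (\Cref{lem3-1}), a phase-by-phase Young interpolation of $L^\infty$ against $L^\ell$ that produces an absorbable multiple of $\sup_{B_\sigma}u_+$ (\Cref{lem:modular-interpolation}), and hole filling (\Cref{lem:hole-filling}). The differences are minor: you keep the tail as $\sup_{B_\sigma}w\cdot\calT_{R/2,R}(u_+;x_0)$ and absorb it afterwards via $\tau h_R(\lambda)\le\varepsilon H_R(\tau)+C_\varepsilon H_R(\lambda)$, and you interpolate at the modular level. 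The paper instead builds $(h_R)^{-1}(\calT_{R/2,R}(u_+;x_0))$ directly into the level $k$ through $\fint_{B_{r_j}}w_{j+1}\dd x\le Y_j/h_R(d_j)$, and it interpolates only after the reduction $(H_R)^{-1}\bigl(\fint_{B_r}H_R(v)\dd x\bigr)\le\bigl(\fint_{B_r}v^p\dd x\bigr)^{1/p}+\bigl(\fint_{B_r}v^q\dd x\bigr)^{1/q}$.

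One slip needs fixing. No splitting of $\R^n\setminus B_{\sigma_j}$ is needed, since $\sigma_j\ge R/2$ gives $\R^n\setminus B_{\sigma_j}\subset\R^n\setminus B_{R/2}$ and $(u-k)_+\le u_+$, and \Cref{cac} already carries $|x-x_0|$ in its tail kernel. The split as you wrote it is actually wrong: $B_R\setminus B_{\sigma_j}\not\subset B_\sigma$ when $\sigma<R$, so that piece is controlled only by $\sup_{B_R}u_+$, which could not be absorbed in the hole-filling step.
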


For clarity, we restate the weak Harnack inequality
from~\cite[Theorem~2.5]{FKZ26} in the normalization used here.

\begin{proposition}[Weak Harnack estimate]
\label{pro1}
Assume that \eqref{eq:exponents}--\eqref{eq:sobolev-range} hold and
\begin{align*}
\begin{cases}
\alpha<tq \leq sp+\alpha,  \quad &\text{for  } s\le t,\\[2mm]
tq\le sp,        \quad&\text{for  } s>t.
\end{cases}
\end{align*}
Let \(u\in\mathcal{A}(\Omega)\cap L^{p-1}_{sp}(\R^n)
\cap L^{q-1}_{a,tq}(\Omega,\R^n)\) be a local minimizer
of~\eqref{main} in \(\Omega\). Let \(0<R\le1\), suppose that
\(B_{2R}(x_0)\Subset\Omega\) and \(u\ge0\) a.e.\ in
\(B_{2R}(x_0)\).
Then there exist constants \(\varepsilon\in(0,1)\) and \(C\ge1\), depending
only on \(\mathfrak D_u(B_{2R}(x_0))\), such that
\begin{equation}\label{eq:weak-harnack}
    \left(\fint_{B_{R/2}(x_0)}u^\varepsilon\dd x\right)^{1/\varepsilon}
    \le
    C\left[
        \inf_{B_{R/2}(x_0)}u+(h_{2R})^{-1}\bigl(\calT_{2R}(u_-;x_0)\bigr)
    \right].
\end{equation}
\end{proposition}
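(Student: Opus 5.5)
The plan is to obtain \Cref{pro1} directly from \cite[Theorem~2.5]{FKZ26} rather than re-running a De Giorgi/Moser argument. The work is then to check that our hypotheses imply theirs, and to convert their tail term and their intrinsic scale into $(h_{2R})^{-1}(\calT_{2R}(u_-;x_0))$ as defined in \eqref{eq:two-radius-tail}.

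\textbf{Step 1: hypotheses.} I would first check that the assumptions match those of \cite[Theorem~2.5]{FKZ26}.
\begin{itemize}
\item The structural assumptions \eqref{eq:exponents}--\eqref{eq:sobolev-range} and the exponent restrictions ($\alpha<tq\le sp+\alpha$ if $s\le t$, and $tq\le sp$ if $s>t$) are taken verbatim from that paper.
\item The function class $\mathcal A(\Omega)\cap L^{p-1}_{sp}(\R^n)\cap L^{q-1}_{a,tq}(\Omega,\R^n)$ is the one used there.
\item $u\ge0$ on $B_{2R}(x_0)\Subset\Omega$ with $R\le1$ is the same localization.
\end{itemize}
So the estimate holds in their normalization, with constants depending only on the data collected in $\mathfrak D_u(B_{2R}(x_0))$.

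\textbf{Step 2: the tail term.} Their right-hand side is expressed through a tail of $u_-$ outside $B_{2R}(x_0)$, composed with an inverse of an intrinsic function at scale $2R$. I would show that their tail is bounded by a constant multiple of $\calT_{2R}(u_-;x_0)$.
\begin{itemize}
\item If their tail carries the kernel $|y-x|^{-n-sp}$ with $x\in B_{R}$ or $B_{2R}$ instead of $|y-x_0|^{-n-sp}$, I would use that $|y-x|\ge c|y-x_0|$ for $y\notin B_{2R}(x_0)$ and $x\in B_R(x_0)$. This gives comparability up to a constant depending on $n,s,p,t,q$.
\item For the weighted part, the supremum over $x\in B_{2R}(x_0)$ in $\mathrm{Tail}_a$ dominates any pointwise or averaged weighting of $a(x,\cdot)$ over $B_{2R}$. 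The same applies to a version with $a$ frozen at the centre.
\end{itemize}

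\textbf{Step 3: the intrinsic scale.} I would pass from their scale function, which may be $H$ or $h$ with $a^-$ taken over $B_{2R}$ or $B_R$, to $(h_{2R})^{-1}$.
\begin{itemize}
\item For $\lambda\ge1$ one has $h_{2R}(\lambda\tau)\ge\lambda^{p-1}h_{2R}(\tau)$. Hence $(h_{2R})^{-1}(C\sigma)\le C^{1/(p-1)}(h_{2R})^{-1}(\sigma)$, so multiplicative constants inside the inverse can be moved outside.
\item If their statement uses $a^-$ on a smaller ball, then $a^-_{2R}\le a^-_R$ makes $h_R\ge c\,h_{2R}$ after rescaling the powers of the radius by factors $2^{sp}$ and $2^{tq}$. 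The inverse is then bounded above by $C(h_{2R})^{-1}$, which goes in the favourable direction for an upper bound.
\end{itemize}

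\textbf{Main obstacle.} I expect the delicate point to be the weighted tail. Because $a$ depends on both variables, the precise form of the $a$-weighted tail in \cite{FKZ26}, namely where the supremum or evaluation in $x$ is taken, has to be shown to be dominated by our $\mathrm{Tail}_a(u_-;x_0,2R,2R)$ with the supremum over $B_{2R}(x_0)$. I would handle it by directly comparing the kernels and domains of integration as in Step 2.

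Once Steps 2 and 3 are in place, substituting into the conclusion of \cite[Theorem~2.5]{FKZ26} yields \eqref{eq:weak-harnack} with possibly enlarged $C$. If necessary, I would also decrease $\varepsilon$, using H\"older's inequality on $B_{R/2}(x_0)$.
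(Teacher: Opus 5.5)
Your proposal matches what the paper does: \Cref{pro1} is not reproved there but simply restated from \cite[Theorem~2.5]{FKZ26} in the present normalization. Your plan of checking the hypotheses and then translating the tail and the intrinsic scale (using $h_R\ge h_{2R}$, hence $(h_R)^{-1}\le(h_{2R})^{-1}$, and $(h_{2R})^{-1}(C\sigma)\le C^{1/(p-1)}(h_{2R})^{-1}(\sigma)$ for $C\ge1$) is exactly the bookkeeping this restatement implicitly relies on, with one caution: the kernel comparison $|y-x|\ge\frac12|y-x_0|$ for $y\notin B_{2R}(x_0)$ holds only for $x$ in a strictly smaller ball such as $B_R(x_0)$, not for $x\in B_{2R}(x_0)$, so the $x\in B_{2R}$ branch of your Step 2 does not go through as written.
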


\begin{remark}
The key feature of \Cref{thm1} is its validity for every \(\ell>0\), including \(\ell<p-1\).  This removes the exponent mismatch and
allows the weak Harnack exponent to be used directly in the upper estimate. 
\end{remark}

Combining the local upper estimate with the weak Harnack inequality yields the
following Harnack-type estimate.

\begin{theorem}[Harnack-type inequality]
\label{thm:harnack-tail}
Assume that \eqref{eq:exponents}--\eqref{eq:sobolev-range} hold and
\begin{align*}
\begin{cases}
\alpha<tq \leq sp+\alpha,  \quad &\text{for  } s\le t,\\[2mm]
tq\le sp,        \quad&\text{for  } s>t.
\end{cases}
\end{align*}
Let \(u\in\mathcal{A}(\Omega)\cap L^{p-1}_{sp}(\R^n)
\cap L^{q-1}_{a,tq}(\Omega,\R^n)\) be a local minimizer
of~\eqref{main} in \(\Omega\). Let \(0<R\le1\), suppose that
\(B_{2R}(x_0)\Subset\Omega\) and \(u\ge0\) a.e.\ in
\(B_{2R}(x_0)\).
Then there exists a constant \(C\ge1\), depending only on
\(\mathfrak D_u(B_{2R}(x_0))\), such that
\begin{equation}\label{eq:harnack-tail}
    \sup_{B_{R/4}(x_0)}u
    \le
    C\left[\inf_{B_{R/4}(x_0)}u+(h_{R/2})^{-1}\bigl(\calT_{R/4,R/2}(u_+;x_0)\bigr)+(h_{2R})^{-1}\bigl(\calT_{2R}(u_-;x_0)\bigr)\right].
\end{equation}
\end{theorem}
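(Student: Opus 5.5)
We combine \Cref{thm1} at radius \(R/2\) with \Cref{pro1} at radius \(R\), using the small exponent \(\varepsilon\) of the latter as the integrability exponent \(\ell\) in the former.

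\emph{Step 1 (upper bound).} Since \(B_{R/2}(x_0)\subset B_{2R}(x_0)\Subset\Omega\) and \(R/2\le1\), we apply \Cref{thm1} with \(R\) replaced by \(R/2\) and \(\ell=\varepsilon\), where \(\varepsilon\in(0,1)\) is the exponent provided by \Cref{pro1} for the ball \(B_{2R}(x_0)\). The hypotheses of \Cref{thm1} are implied by those of the present theorem. This gives
\[
\sup_{B_{R/4}(x_0)}u\le C_\varepsilon\Bigl(\fint_{B_{R/2}(x_0)}u_+^\varepsilon\dd x\Bigr)^{1/\varepsilon}+C_\varepsilon (h_{R/2})^{-1}\bigl(\calT_{R/4,R/2}(u_+;x_0)\bigr).
\]
The constant \(C_\varepsilon\) depends on \(\varepsilon\) and \(\mathfrak D_u(B_{R/2}(x_0))\). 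Both are controlled by \(\mathfrak D_u(B_{2R}(x_0))\), assuming the data \(\mathfrak D_u\) are monotone in the ball (e.g.\ \(\|u\|_{L^\infty}\), \(\|a\|_{L^\infty}\), \([a]_\alpha\) on the ball). This monotonicity is the point to check in the definition of \(\mathfrak D_u\).

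\emph{Step 2 (lower bound).} Because \(u\ge0\) a.e.\ in \(B_{2R}(x_0)\), we have \(u_+=u\) on \(B_{R/2}(x_0)\). \Cref{pro1} at radius \(R\) then bounds the average:
\[
\Bigl(\fint_{B_{R/2}}u_+^\varepsilon\Bigr)^{1/\varepsilon}\le C\bigl[\inf_{B_{R/2}(x_0)}u+(h_{2R})^{-1}(\calT_{2R}(u_-;x_0))\bigr].
\]
Finally, \(\inf_{B_{R/2}}u\le\inf_{B_{R/4}}u\), since the infimum over a larger set is smaller.

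\emph{Step 3 (combination).} Inserting the bound of Step 2 into Step 1 yields the claimed inequality with constant \(C\,C_\varepsilon\). The only genuine subtlety is compatibility. The exponent \(\varepsilon\) must be fixed first, from the weak Harnack inequality, before invoking \Cref{thm1}, so that \(C_\varepsilon\) depends only on \(\mathfrak D_u(B_{2R}(x_0))\). This is legitimate precisely because \Cref{thm1} holds for every \(\ell>0\). The tail terms then appear exactly at the scales in the statement, and no conversion between tails at different radii is needed.
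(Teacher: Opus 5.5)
Your proposal is correct and matches the paper's proof step for step: you fix the weak Harnack exponent $\varepsilon$ first, apply the local upper estimate at reference radius $R/2$ with $\ell=\varepsilon$ (using $u_+=u$ on $B_{R/2}(x_0)$), substitute the weak Harnack bound at radius $R$, and pass from $\inf_{B_{R/2}(x_0)}u$ to $\inf_{B_{R/4}(x_0)}u$. Your monotonicity check on $\mathfrak D_u$ is the point the paper covers by noting that the local data on $B_{R/2}(x_0)$ are controlled by $\mathfrak D_u(B_{2R}(x_0))$; it does hold, because the $u$-dependent entries $\|u\|_{L^\infty(E)}$ and $[u]_{W^{s,p}(E)}$ are monotone in $E$ and the coefficient data are global.
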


The preceding estimate has the following single-tail reformulation.

\begin{corollary}\label{cor:harnack-single-tail}
Under the assumptions of \Cref{thm:harnack-tail}, there exists a constant
\(C\ge1\), depending only on \(\mathfrak D_u(B_{2R}(x_0))\), such that
\begin{equation}\label{eq:harnack-single-tail}
    \sup_{B_{R/4}(x_0)}u\le
    C\left[\inf_{B_{R/4}(x_0)}u+(h_{2R})^{-1}\bigl(\calT_{R/4,2R}(|u|;x_0)\bigr)\right].
\end{equation}
\end{corollary}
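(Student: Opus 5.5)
We derive \eqref{eq:harnack-single-tail} directly from \eqref{eq:harnack-tail} in \Cref{thm:harnack-tail}. It suffices to bound the two tail terms on its right-hand side by a constant multiple of \((h_{2R})^{-1}\bigl(\calT_{R/4,2R}(|u|;x_0)\bigr)\). Since \(u_+,u_-\le|u|\) pointwise, both \(\mathrm{Tail}\) and \(\mathrm{Tail}_a\) are monotone in the function argument, and each \((h_r)^{-1}\) is increasing. So the task reduces to comparing radii and scale functions.

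\textbf{Step 1: the negative tail.} Since \(\R^n\setminus B_{2R}(x_0)\subset\R^n\setminus B_{R/4}(x_0)\), the unweighted tail satisfies \(\mathrm{Tail}(u_-;x_0,2R)\le\mathrm{Tail}(|u|;x_0,R/4)\). The weighted tail \(\mathrm{Tail}_a(\cdot;x_0,r,R)\) is a supremum over \(x\in B_R(x_0)\) of integrals over \(\R^n\setminus B_r(x_0)\), so it increases both when \(r\) decreases and when \(R\) increases. Hence \(\calT_{2R}(u_-;x_0)=\calT_{2R,2R}(u_-;x_0)\le\calT_{R/4,2R}(|u|;x_0)\), and monotonicity of \((h_{2R})^{-1}\) handles this term with constant one.

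\textbf{Step 2: the positive tail.} The same monotonicity in the outer radius (\(B_{R/2}\subset B_{2R}\)) gives \(\calT_{R/4,R/2}(u_+;x_0)\le\calT_{R/4,2R}(|u|;x_0)=:T\). It remains to show \((h_{R/2})^{-1}(T)\le C\,(h_{2R})^{-1}(T)\). This scale comparison is the main point to check.

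Set \(\lambda=(h_{2R})^{-1}(T)\). Because \(a^-_r\) is an infimum over a shrinking set as \(r\) decreases, \(a^-_{R/2}\ge a^-_{2R}\). Using this with \(R/2<2R\), we get, for any \(\kappa\ge1\),
\[
h_{R/2}(\kappa\lambda)\ge \frac{(\kappa\lambda)^{p-1}}{(2R)^{sp}}+a^-_{2R}\frac{(\kappa\lambda)^{q-1}}{(2R)^{tq}}\ge\kappa^{p-1}h_{2R}(\lambda)\ge T .
\]
Hence \((h_{R/2})^{-1}(T)\le\kappa\lambda\) even with \(\kappa=1\). In fact the coefficient monotonicity works in our favour and no constant is lost; a scaling constant \(4^{(tq)/(q-1)}\) would be needed only if one compared in the opposite direction. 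Making this direction explicit is the one place where care is needed, namely that the smaller radius yields a larger \(h\) and hence a smaller inverse.

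\textbf{Step 3: conclusion.} Insert Steps 1 and 2 into \eqref{eq:harnack-tail} and merge the two tail terms. This gives \eqref{eq:harnack-single-tail} with the constant \(2C\), which depends only on \(\mathfrak D_u(B_{2R}(x_0))\).
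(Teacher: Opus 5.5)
Your proof is correct. It rests on the same two facts as the paper's argument:
\begin{itemize}
\item the tails are monotone in the function (via $u_\pm\le|u|$ and $a\ge0$), in the exterior radius, and in the reference radius of the weighted supremum;
\item $h_{R/2}\ge h_{2R}$ pointwise, because $a^-_{R/2}\ge a^-_{2R}$ and $R/2<2R$, so that $(h_{R/2})^{-1}\le(h_{2R})^{-1}$.
\end{itemize}

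The difference lies in how the two tails are merged. Write $A=\calT_{R/4,R/2}(u_+;x_0)$, $B=\calT_{2R}(u_-;x_0)$ and $T=\calT_{R/4,2R}(|u|;x_0)$. The paper's route has three steps:
\begin{itemize}
\item it first bounds $(h_{R/2})^{-1}(A)+(h_{2R})^{-1}(B)\le 2(h_{2R})^{-1}(A+B)$;
\item it then shows $A+B\le 2T$, using the sign condition $u_-=0$ a.e.\ in $B_{2R}(x_0)$ to lower the exterior radius of $B$ to $R/4$ with equality, together with the splitting $|u|^{m-1}=u_+^{m-1}+u_-^{m-1}$;
\item it finally needs the doubling bound $(h_{2R})^{-1}(2S)\le 2^{1/(p-1)}(h_{2R})^{-1}(S)$, and ends with the factor $2^{1+1/(p-1)}$.
\end{itemize}

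You instead bound $A$ and $B$ separately by $T$. This needs neither the sign condition, since enlarging the integration region of a nonnegative integrand suffices, nor the doubling property. It also gives the smaller factor $2$. The paper's detour yields the finer intermediate inequality $A+B\le 2T$, but nothing in the corollary requires it.

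One aside in your Step 2 is inaccurate, though your argument never uses it. The reverse comparison would not merely cost a factor $4^{tq/(q-1)}$. Even with equal coefficients, the $p$-phase alone would force a factor $4^{sp/(p-1)}$. More seriously, $a^-_{2R}$ may vanish while $a^-_{R/2}>0$, and then for $q>p$ no uniform constant bounds $(h_{2R})^{-1}$ by $(h_{R/2})^{-1}$.
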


\begin{remark}

\begin{itemize}
\item[(i)] The two radii in \(\calT_{\rho,R}\) have different roles:
\(\rho\) is the exterior cutoff radius, whereas \(R\) is the intrinsic
reference radius used for the coefficient infimum, the supremum in the
weighted tail, and the normalization through \(H_R\) and \(h_R\). Thus
the two-radius notation decouples the exterior interaction scale from the
intrinsic reference scale.
\smallskip

\item[(ii)] \Cref{cor:harnack-single-tail} combines the two terms into one
intrinsic tail at the common reference scale \(2R\). This form is more
compact, but it is slightly weaker than \eqref{eq:harnack-tail}: it loses the
sign separation and replaces the sharper positive reference scale \(R/2\) by
\(2R\). Thus it is a reformulation of the two-tail estimate, not an absorption
of the positive-part tail.

\end{itemize}
\end{remark}

Finally, by~\cite[Section~3]{BOS22}, the preceding results also hold for weak
solutions of the nonlocal double phase equation
\begin{align*}
&\mathrm{P.V.}\int_{\mathbb{R}^n}|u(x)-u(y)|^{p-2}(u(x)-u(y))K_{sp}(x,y)\dd y \\
&\quad+\mathrm{P.V.}\int_{\mathbb{R}^n}a(x,y)
|u(x)-u(y)|^{q-2}(u(x)-u(y))K_{tq}(x,y)\dd y=0
\quad \text{in } \Omega,
\end{align*}
where $\mathrm{P.V.}$ stands for the Cauchy principal value.

The paper is organized as follows. In \Cref{sec:algebra-euler} we collect the
notation, definitions, and auxiliary lemmas. The central argument appears in
\Cref{sec:upper-harnack}, where we prove the flexible-radius modular estimate,
derive the arbitrary-exponent upper bound, and combine it with the weak
Harnack inequality.

\section{Preliminaries}
\label{sec:algebra-euler}

Unless explicitly stated otherwise, \(C\ge1\) denotes a constant that may
change from line to line. Dependencies on parameters are indicated in
parentheses; for example, \(C(p,q,n)\) means that \(C\) depends only on
\(p,q,n\). No such constant depends on the radii, levels, or iteration indices
under consideration.

For a subset \(E\subset\Omega\), we collect the relevant data as follows. If
\(s\le t\), let
\begin{equation*}
\mathfrak D_u(E):=\begin{cases}
\bigl(n,p,q,s,t,\Lambda,\alpha,[a]_\alpha,\|u\|_{L^\infty(E)}\bigr),
   &\text{if } sp\le n,\\[2mm]
\bigl(n,p,q,s,t,\Lambda,\alpha,[a]_\alpha,[u]_{W^{s,p}(E)}\bigr),
   &\text{if } sp> n.
\end{cases}
\end{equation*}
If \(s>t\), let
\begin{equation*}
\mathfrak D_u(E):=\begin{cases}
\bigl(n,p,q,s,t,\Lambda,\|a\|_{L^\infty},\|u\|_{L^\infty(E)}\bigr),
   &\text{if } sp\le n,\\[2mm]
\bigl(n,p,q,s,t,\Lambda,\|a\|_{L^\infty},[u]_{W^{s,p}(E)}\bigr),
   &\text{if } sp> n.
\end{cases}
\end{equation*}

For \(s\in(0,1)\) and \(p\ge1\), the fractional Sobolev space
\(W^{s,p}(\Omega)\) is defined by
\[
W^{s,p}(\Omega):=\left\{u\in L^p(\Omega)\Bigg|[u]_{W^{s,p}(\Omega)}:=\left(\int_\Omega\int_\Omega\frac{|u(x)-u(y)|^p}{|x-y|^{n+sp}}\,dx\,dy\right)^\frac{1}{p}<\infty\right\},
\]
and is equipped with the norm
\[
\|u\|_{W^{s,p}(\Omega)}=\|u\|_{L^p(\Omega)}+[u]_{W^{s,p}(\Omega)}.
\]
Let
\[
\mathcal{E}(u;\Omega):=\iint_{\mathcal{C}_\Omega}\left(\frac{|u(x)-u(y)|^p}{|x-y|^{n+sp}}+a(x,y)\frac{|u(x)-u(y)|^q}{|x-y|^{n+tq}}\right)\,dx\,dy,
\]
and define a function space related to minimizers of \eqref{main} as
\[
\mathcal{A}(\Omega):=\left\{u:\mathbb{R}^n\rightarrow\mathbb{R} \,\Big|\, u|_\Omega\in L^p(\Omega) \ \text{and } \mathcal{E}(u;\Omega)<\infty\right\}.
\]
By definition, \(\mathcal{A}(\Omega)\subset W^{s,p}(\Omega)\). Moreover, the
fractional Sobolev embedding theorem gives
\(\mathcal{A}(\Omega)\subset L^q(\Omega)\), provided
\(p\le q\le np/(n-sp)\) if \(sp<n\), or \(p\le q<\infty\) if \(sp\ge n\).

We next recall the definition of a minimizer of~\eqref{main}.

\begin{definition}
\label{def:minimizer}
A function \(u\in\mathcal{A}(\Omega)\) is a minimizer of~\eqref{main} if
\[
 \mathcal{F}(u;\Omega)\le\mathcal{F}(v;\Omega)
\]
for every \(v\in\mathcal{A}(\Omega)\) with \(v=u\) a.e.\ in
\(\mathbb{R}^n\setminus\Omega\).
\end{definition}

The existence of minimizers for~\eqref{main} was established
in~\cite[Section~3]{BOS22}. To account for the nonlocal character of the
functional, we introduce the tail space
\[
L^{p-1}_{sp}(\mathbb{R}^n):=\left\{u:\mathbb{R}^n\rightarrow\mathbb{R}\,\Bigg|\,\int_{\mathbb{R}^n}\frac{|u(x)|^{p-1}}{(1+|x|)^{n+sp}}\,dx<\infty\right\}.
\]
The \(q\)-growth term in~\eqref{main}, modulated by the coefficient \(a\),
also requires the weighted tail space
\[
L^{q-1}_{a,tq}(\Omega,\mathbb{R}^n):=\left\{u: \mathbb{R}^n\rightarrow\mathbb{R}\,\Bigg|\,\sup_{x\in \Omega}\int_{\mathbb{R}^n}a(x,y)\frac{|u(y)|^{q-1}}{(1+|y|)^{n+tq}}\,dy<\infty\right\}.
\]
The definitions of these tail spaces ensure that the tails introduced above
are finite. Throughout the paper, we write
\(u_+:=\max\{u,0\}\) and \(u_-:=\max\{-u,0\}\). For brevity, set
\[
    M_R^+(u;x_0):=\|u_+\|_{L^\infty(B_R(x_0))},
\]

We next recall the auxiliary results used below. Under
\eqref{eq:sobolev-range}, minimizers are locally bounded.

\begin{lemma}[\cite{FKZ26}]
\label{lem2-1}
Suppose that \eqref{eq:exponents}--\eqref{eq:a-basic} and
\eqref{eq:sobolev-range} hold. Then every minimizer
\(u\in\mathcal{A}(\Omega)\cap L_{sp}^{p-1}(\R^n)
\cap L_{a,tq}^{q-1}(\Omega,\R^n)\) of~\eqref{main} is locally bounded in
\(\Omega\).
\end{lemma}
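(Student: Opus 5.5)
The statement immediately above is \Cref{lem2-1}, the local boundedness of minimizers. Since it is quoted from \cite{FKZ26}, my plan is to reduce it to \cite[Theorem~2.9]{FKZ26}. I would also sketch the De Giorgi argument behind that result, to confirm the structural range is covered.

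\emph{Reduction.} The only step is to check that our assumptions imply those of \cite[Theorem~2.9]{FKZ26}. The exponents and kernel bounds are \eqref{eq:exponents}--\eqref{eq:kernel-bounds}, and the symmetry and nonnegativity of \(a\) are \eqref{eq:a-basic}. The growth restriction \eqref{eq:sobolev-range} gives \(q\le p^*_s=np/(n-sp)\) when \(sp<n\), which is precisely what makes \(\mathcal{A}(\Omega)\subset L^q_{\rm loc}(\Omega)\) by fractional Sobolev embedding. The tail spaces \(L^{p-1}_{sp}\) and \(L^{q-1}_{a,tq}\) make both tails finite. No regularity of \(a\) beyond local boundedness is needed, since boundedness is a statement about levels, not oscillation.

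\emph{Self-contained route.} First I would prove a Caccioppoli inequality with tails for \(w_k=(u-k)_+\) on concentric balls \(B_\rho\subset B_r\Subset\Omega\). To do so, test minimality against \(v=u-\eta^q w_k\) with a cutoff \(\eta\). This yields a bound for \([\eta w_k]_{W^{s,p}}^p\) in terms of:
\begin{itemize}
\item \((r-\rho)^{-p}\int w_k^p\);
\item \(\|a\|_{L^\infty(B_r\times B_r)}(r-\rho)^{-q}\int w_k^q\);
\item the tail terms multiplied by \(\int w_k\).
\end{itemize}
Second, I would apply the fractional Sobolev inequality to \(\eta w_k\) and run the standard De Giorgi iteration with radii \(r_j\downarrow r/2\) and levels \(k_j\uparrow 2k_0\). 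The \(p\)-term gives the gain exponent \(p^*_s/p>1\). The \(q\)-term is handled by Hölder: \(\int w^q\le \|w\|_{L^{p^*_s}}^{q}|\{w>0\}|^{1-q/p^*_s}\), which is exactly where \(q\le p^*_s\) is needed. With this, the recursion \(Y_{j+1}\le C b^j Y_j^{1+\delta}\) closes once \(k_0\) is large depending on the energy and the tails. When \(sp\ge n\), boundedness follows from Morrey/Sobolev embedding with any finite \(q\).

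\emph{Main obstacle.} The delicate point is the \(q\)-phase in the borderline case \(q=p^*_s\). There the Hölder step gives no extra smallness in measure, so the nonlinear term \(\|w\|^{q}\) must instead be made small. I would try to get this from absolute continuity of \(\int_{\{u>k\}}|u|^{p^*_s}\) for large \(k\), which is why the constant depends on \(u\) itself rather than only on the data.
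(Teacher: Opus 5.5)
Your reduction is exactly what the paper does: \Cref{lem2-1} is not proved in the paper but quoted from \cite[Theorem~2.9]{FKZ26}, and the paper's only justification is the remark that \eqref{eq:sobolev-range} lies inside the range covered there. One correction to your supplementary De Giorgi sketch: at \(sp=n\) the fractional embedding gives \(L^m\) for every finite \(m\) but not \(L^\infty\), so Morrey's embedding only settles \(sp>n\); for \(sp=n\) you must rerun the iteration with \(p_s^*\) replaced by some finite \(m>q\), which also avoids the borderline case you flag.
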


We also recall the improved Caccioppoli inequality
from~\cite[Lemma~3.2]{FKZ26}, which plays the crucial role in \Cref{thm1}.

\begin{lemma}
\label{cac}
Assume that \eqref{eq:exponents}, \eqref{eq:kernel-bounds}, and
\eqref{eq:a-regular} hold. Let \(B_R=B_R(x_0)\subset\Omega\) with \(R\le1\),
and suppose that
\(u\in\mathcal{A}(\Omega)\cap L^{p-1}_{sp}(\R^n)
\cap L^{q-1}_{a,tq}(\Omega,\R^n)\) is a minimizer of~\eqref{main}. If
\(sp\le n\), assume in addition that \(u\) is bounded in \(B_R\). Then, for
\(w_\pm(x):=(u-k)_\pm(x)\) with
\(|k|\le\|u\|_{L^\infty(B_R)}\), we have
\begin{align*}
&\int_{B_\rho}\int_{B_\rho} \left(\frac{|w_\pm(x)-w_\pm(y)|^p}{|x-y|^{n+sp}}+a^-_R\frac{|w_\pm(x)-w_\pm(y)|^q}{|x-y|^{n+tq}}\right)\, dx\,dy\\
&\qquad+\int_{B_\rho}w_\pm(x)\left(\int_{\mathbb{R}^n}\frac{w_\mp^{p-1}(y)}{|x-y|^{n+sp}}+a(x,y)\frac{w_\mp^{q-1}(y)}{|x-y|^{n+tq}}\,dy\right)\,dx\\
&\le C\left(\frac{r}{r-\rho}\right)^{n+q}\Bigg[\int_{B_r} \left(\frac{w_\pm^p}{r^{sp}}+a^-_R\frac{w_\pm^q}{r^{tq}}\right)\,dx\\
&\qquad+\int_{B_r}w_\pm(y)\left(\int_{\mathbb{R}^n\setminus B_\rho}\frac{w^{p-1}_\pm(x)}{|x-x_0|^{n+sp}}+a(x,y)\frac{w^{q-1}_\pm(x)}{|x-x_0|^{n+tq}}\,dx\right)\,dy\Bigg]
\end{align*}
for \(R/2\le\rho<r\le R\), provided that
\begin{equation}
\label{exp}
\begin{cases}
tq\le sp+\alpha,  \quad &\text{for } sp\le n,\\[2mm]
tq\le n+\alpha+\left(s-\frac{n}{p}\right)q,  \quad &\text{for } sp>n.
\end{cases}
\end{equation}
Here \(C\ge1\) depends only on \(\mathfrak D_u(B_R)\).
\end{lemma}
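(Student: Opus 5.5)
The plan is the standard comparison argument for minimizers. The one non-standard step is replacing the variable coefficient \(a(x,y)\) on the right-hand side by its infimum \(a^-_R\); the regularity \eqref{eq:a-regular} and the balance condition \eqref{exp} are used for exactly this. We treat \(w_+=(u-k)_+\); the case \(w_-\) follows by applying the argument to \(-u\), which minimizes the same functional.

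\textbf{Step 1: comparison function.} Fix \(R/2\le\rho<r\le R\) and a cutoff \(\eta\in C_c^\infty(B_{(\rho+r)/2})\) with \(0\le\eta\le1\), \(\eta\equiv1\) on \(B_\rho\) and \(|\nabla\eta|\le C/(r-\rho)\). Set \(v:=u-\eta^q w_+\). Then \(v=u\) outside \(B_r\), so \(v\) is admissible and \(\calF(u;\Omega)\le\calF(v;\Omega)\).

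\textbf{Step 2: splitting the energy.} Write \(\calC_\Omega\) as \((B_r\times B_r)\) together with the two symmetric copies of \(B_r\times(\R^n\setminus B_r)\); only these pieces change. On \(B_r\times B_r\), the usual pointwise inequalities for \(G(\xi)=|\xi|^\gamma\) with \(\gamma\in\{p,q\}\) (as in \cite{DKP16}) give the following. Comparing \(|u(x)-u(y)|^\gamma\) with \(|v(x)-v(y)|^\gamma\) produces, on the left, the quantity \(|w_+(x)-w_+(y)|^\gamma\max\{\eta(x),\eta(y)\}^\gamma\) minus a small multiple of itself. It also produces the cross terms \(w_+(x)w_-(y)^{\gamma-1}\eta^q(x)\) in the good direction. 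On the right it produces \(\max\{w_+(x),w_+(y)\}^\gamma|\eta(x)-\eta(y)|^\gamma\).

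On \(B_r\times(\R^n\setminus B_r)\), the contribution from \(u(y)\le k\) again has the good sign. It yields the \(w_-^{\gamma-1}\) nonlocal term on the left, and we extend the \(y\)-integration to all of \(\R^n\) by the same computation on the diagonal part. The contribution from \(u(y)>k\) is bounded by \(w_+(x)w_+(y)^{\gamma-1}/|x-y|^{n+\gamma\sigma}\). Since \(|x-y|\ge c\,\tfrac{r-\rho}{r}|y-x_0|\) for \(x\in\operatorname{supp}\eta\), this turns into the tail term at radius \(\rho\) with the factor \((r/(r-\rho))^{n+q}\). Integrating the \(|\eta(x)-\eta(y)|^\gamma\) terms against the kernel gives \(w_+^p r^{-sp}\) and \(a(x,y)w_+^q r^{-tq}\), each times \((r/(r-\rho))^{q}\).

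On the left-hand side we simply use \(a\ge a^-_R\).

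\textbf{Step 3 (main obstacle): removing the variable coefficient on the right.} After Step 2 the right-hand side contains \(\int_{B_r}\int_{B_r}a(x,y)\,w_+^q(x)|\eta(x)-\eta(y)|^q|x-y|^{-n-tq}\,dx\,dy\), and this must be bounded by \(a^-_R\int_{B_r}w_+^q r^{-tq}\) plus an admissible \(p\)-term. We proceed by cases.

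\emph{Case \(s\le t\).} We write \(a(x,y)\le a^-_R+[a]_\alpha(4R)^\alpha\). The excess contributes \(C R^{\alpha-tq}\int w_+^q\). If \(sp\le n\), we bound \(w_+^q\le(2\|u\|_{L^\infty(B_R)})^{q-p}w_+^p\), using \(|k|\le\|u\|_{L^\infty(B_R)}\). Then \(tq\le sp+\alpha\) and \(R\le1\) give \(R^{\alpha-tq}\le R^{-sp}\le C r^{-sp}\), so the excess is absorbed into the \(p\)-term with a constant depending on \(\|u\|_{L^\infty(B_R)}\). If \(sp>n\), we instead use the fractional Morrey embedding on \(B_R\): \(\operatorname{osc}_{B_R}u\le C[u]_{W^{s,p}(B_R)}R^{s-n/p}\). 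Combined with the condition on \(k\), this gives \(w_+\le C[u]_{W^{s,p}}R^{s-n/p}\). Hence \(R^{\alpha-tq}w_+^{q}\le C R^{\alpha-tq+(s-n/p)(q-p)}w_+^p\), and the second line of \eqref{exp} is precisely what makes this power at least \(-sp\).

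\emph{Case \(s>t\).} We only have \(a\le M\). The same \(L^\infty\) (or Morrey) bound on \(w_+\), together with \(R^{-tq}\le R^{-sp}\), again turns the whole \(q\)-term into a \(p\)-term. In this case \(a^-_R\) on the right is not even needed.

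I expect the exponent bookkeeping in this step to be the only delicate point, in particular checking that every absorbed constant depends only on \(\mathfrak D_u(B_R)\).

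\textbf{Step 4: conclusion.} Collecting the terms and restricting the left-hand double integral to \(B_\rho\times B_\rho\), where \(\eta\equiv1\), yields the stated inequality.
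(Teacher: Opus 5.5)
The paper does not prove this lemma; it quotes it from \cite[Lemma~3.2]{FKZ26}, so your argument has to stand on its own. Your overall plan is the natural one: compare with \(u-\eta^q w_+\), keep the good-sign cross terms, get the tail from \(|x-y|\ge c\frac{r-\rho}{r}|y-x_0|\), and remove the variable coefficient using \(a\le a_R^-+[a]_\alpha(4R)^\alpha\) on \(B_R\times B_R\) plus a size bound on \(w_+\). The branch \(sp\le n\) of Step~3 is correct.

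The branch \(sp>n\), however, rests on a false claim. Morrey's embedding controls only \(\operatorname{osc}_{B_R}u\). The bound \(w_+\le\sup_{B_R}u-k\) is dominated by this oscillation only when \(k\ge\inf_{B_R}u\), and the hypothesis \(|k|\le\|u\|_{L^\infty(B_R)}\) does not give that. In fact no bound \(w_+\le C(\mathfrak D_u(B_R))R^{s-n/p}\) can hold for all admissible \(k\). Both \(\calF\) and \([u]_{W^{s,p}(B_R)}\) are invariant under \(u\mapsto u+c\), so \(u+c\) is a minimizer with the same data. Yet at the admissible level \(k=0\) one has \(w_+=u+c\approx c\) on \(B_R\) for \(c\gg\|u\|_{L^\infty(B_R)}\). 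You also cannot fall back on \(w_+\le 2\|u\|_{L^\infty(B_R)}\), for two reasons. First, that norm is not part of \(\mathfrak D_u(B_R)\) when \(sp>n\). Second, it absorbs the excess \(R^{\alpha-tq}\int w_+^q\) only under \(tq\le sp+\alpha\). The wider range \(tq\le n+\alpha+(s-n/p)q\) is exactly what the Morrey gain \(R^{(s-n/p)(q-p)}\) pays for.

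So, as written, your proof gives the case \(sp>n\) only for levels \(k\ge\inf_{B_R}u\), or \(k\le\sup_{B_R}u\) for \(w_-\). That restricted version would in fact suffice for \Cref{lem3-1}, where \(k_{j+1}>k\ge(H_R)^{-1}\bigl(\fint_{B_r}H_R(u_+)\dd x\bigr)\ge\inf_{B_R}u\). It is not, however, the stated lemma, so you need a separate argument for levels below \(k_0:=\inf_{B_R}u\). One possible reduction: for \(k<k_0\), the double integral over \(B_\rho\times B_\rho\) coincides with the one at level \(k_0\), and the right-hand side is nonincreasing in \(k\). Hence everything follows from the level-\(k_0\) estimate except the extra nonlocal term
\[
(k_0-k)\int_{B_\rho}\int_{\R^n}\Bigl[\frac{(u-k)_-^{p-1}(y)}{|x-y|^{n+sp}}+a(x,y)\frac{(u-k)_-^{q-1}(y)}{|x-y|^{n+tq}}\Bigr]\dd y\dd x ,
\]
which nothing in your Steps~2--3 controls and which needs its own estimate.

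Separately, in the case \(s>t\) your step \(R^{-tq}\le R^{-sp}\) uses \(tq\le sp\). Since \eqref{exp} involves an \(\alpha\) that \eqref{eq:a-regular} does not supply when \(s>t\), you should state this assumption explicitly.
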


The following is a standard fast geometric convergence lemma used in De Giorgi iteration; see, for example,~\cite[Lemma~7.1]{Giu03}.

\begin{lemma}\label{lem:fast-convergence}
Let \(A>0\), \(b>1\), and \(\gamma>0\). Suppose that a nonnegative sequence
\((Z_j)_{j\ge0}\) satisfies
\begin{equation*}
    Z_{j+1}\le A b^j Z_j^{1+\gamma},\qquad j\ge0.
\end{equation*}
If
\begin{equation*}
    Z_0\le A^{-1/\gamma}b^{-1/\gamma^2},
\end{equation*}
then 
\begin{equation*}
   Z_j\le A^{-1/\gamma}b^{-1/\gamma^2}b^{-j/\gamma}
   \qquad\text{for every }j\ge0,
\end{equation*}
and hence \(Z_j\to0\) as \(j\to\infty\).
\end{lemma}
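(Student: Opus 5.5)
The plan is a direct induction on \(j\), proving the stronger explicit bound
\[
Z_j\le \theta\, b^{-j/\gamma},\qquad \theta:=A^{-1/\gamma}b^{-1/\gamma^2},
\]
for every \(j\ge0\). This is exactly the asserted estimate. Since \(b>1\) and \(\gamma>0\), it forces \(Z_j\to0\) geometrically, and that is the final claim of \Cref{lem:fast-convergence}.

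For the base case \(j=0\), the inequality \(Z_0\le\theta\) is precisely the smallness hypothesis, so there is nothing to prove.

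For the inductive step, assume \(Z_j\le\theta b^{-j/\gamma}\). Because \(Z_j\ge0\) and \(\tau\mapsto\tau^{1+\gamma}\) is increasing on \([0,\infty)\), the recursive inequality gives
\[
Z_{j+1}\le A b^j\bigl(\theta b^{-j/\gamma}\bigr)^{1+\gamma}
= A\,\theta^{1+\gamma}\, b^{\,j-j(1+\gamma)/\gamma}
= A\,\theta^{1+\gamma}\, b^{-j/\gamma}.
\]
It remains to compute \(A\theta^{1+\gamma}\):
\[
A\theta^{1+\gamma}
= A\cdot A^{-1/\gamma-1}\, b^{-1/\gamma^2-1/\gamma}
=\theta\, b^{-1/\gamma}.
\]
Substituting this back yields \(Z_{j+1}\le\theta b^{-(j+1)/\gamma}\), which closes the induction.

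There is no real obstacle here. The only point requiring care is the exponent bookkeeping in the inductive step. The choice of \(\theta\) is dictated precisely by the requirement \(A\theta^{\gamma}=b^{-1/\gamma}\), which makes the factor \(b^j\) produced by the recursion cancel exactly against the gain \(b^{-j\gamma/\gamma}=b^{-j}\) coming from raising \(b^{-j/\gamma}\) to the power \(1+\gamma\), leaving a single extra factor \(b^{-1/\gamma}\). Finally, \(Z_j\to0\) follows because \(b^{-1/\gamma}<1\).
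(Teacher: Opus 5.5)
Your proposal is correct. The paper gives no proof of its own: it cites Giusti's Lemma~7.1, whose standard proof is exactly this induction on the bound $Z_j\le\theta b^{-j/\gamma}$, with the same identity $A\theta^{1+\gamma}=\theta b^{-1/\gamma}$ closing the inductive step.
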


\section{The upper estimate and proof of the Harnack-type inequalities}
\label{sec:upper-harnack}

In this section, we prove the Harnack-type inequalities. We first derive an
upper estimate at low integrability by a modular De Giorgi iteration with two
free concentric radii; this flexibility is needed to lower the initial
integrability exponent. To obtain the \(L^\ell\)-to-\(L^\infty\) estimate in
\Cref{thm1}, we then establish a modular interpolation inequality and apply a
standard hole-filling lemma. Finally, we combine \Cref{thm1} with the weak
Harnack inequality in \Cref{pro1}.

We begin with a concentric supremum estimate at the natural
\((p,q)\)-growth scales, following the strategy
of~\cite[Theorem~2.10]{FKZ26}.

\begin{lemma}[Concentric modular estimate]
\label{lem3-1}
Assume that \eqref{eq:exponents}--\eqref{eq:sobolev-range} hold. Let
\(B_R(x_0)\Subset\Omega\) with \(0<R\le1\), and let
\(u\in\mathcal{A}(\Omega)\cap L_{sp}^{p-1}(\R^n)
\cap L_{a,tq}^{q-1}(\Omega,\R^n)\) be a local minimizer of~\eqref{main} in
\(\Omega\). Then there exist constants \(C\ge1\) and \(\kappa>0\), depending
only on \(\mathfrak D_u(B_R(x_0))\), such that
\begin{equation}\label{eq:concentric-modular}
\begin{aligned}
    \sup_{B_\rho(x_0)}u_+
    \le C\left(\frac{R}{r-\rho}\right)^\kappa
    \Bigg[
        (H_R)^{-1}
        \left(\fint_{B_r(x_0)}H_R(u_+)\dd x\right)
        +(h_R)^{-1}
        \bigl(\calT_{R/2,R}(u_+;x_0)\bigr)
    \Bigg]
\end{aligned}
\end{equation}
for $\frac R2\le \rho<r\le R$, provided
\begin{equation*}
\begin{cases}
&tq\le sp+\alpha, \quad\text{if } s\le t,\\[2mm]
&tq\le sp, \quad\text{if } s>t.
\end{cases}
\end{equation*}
\end{lemma}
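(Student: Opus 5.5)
We will run a De Giorgi iteration on the level sets of \(u_+\) between the two concentric radii \(\rho<r\), using the Caccioppoli inequality of \Cref{cac} with the fixed reference radius \(R\). This keeps the coefficient \(a^-_R\) and the normalization \(H_R\), \(h_R\) frozen at scale \(R\) while the balls shrink.

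\textbf{Setup.} For \(j\ge0\) set
\[
\rho_j:=\rho+2^{-j}(r-\rho),\qquad \tilde\rho_j:=\tfrac12(\rho_j+\rho_{j+1}),\qquad B_j:=B_{\rho_j}(x_0),
\]
and take levels \(k_j:=k(1-2^{-j-1})\) with \(k>0\) to be chosen. Put \(w_j:=(u-k_j)_+\). The iteration quantity is the normalized modular
\[
Z_j:=\frac{1}{H_R(k)}\fint_{B_j}H_R(w_j)\dd x .
\]
Note that \(H_R(k)=k^p/R^{sp}+a^-_Rk^q/R^{tq}\).

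\textbf{Energy step.} Apply \Cref{cac} to \(w_{j+1}\) on the pair \(\tilde\rho_j<\rho_j\). The factor \((r/(r-\rho))^{n+q}\) becomes \(C2^{j(n+q)}(R/(r-\rho))^{n+q}\); this is the source of \(\kappa\). The right-hand side has two pieces.
\begin{itemize}
\item The local term is at most \(C\int_{B_j}H_{\rho_j}(w_{j+1})\). Since \(R/2\le\rho_j\le R\), this is comparable to \(\int_{B_j}H_R(w_{j+1})\).
\item For the tail term, split \(\R^n\setminus B_{\tilde\rho_j}\) into \(B_R\setminus B_{\tilde\rho_j}\) and the far part. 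For \(x\in B_j\) and \(y\) outside \(B_{\tilde\rho_j}\) we have \(|y-x_0|/|y-x|\le C2^{j}R/(r-\rho)\). The near part is bounded by the local modular. The far part involves \(|y-x_0|\ge\tilde\rho_j\ge R/2\), so it is controlled by \(\calT_{R/2,R}(u_+;x_0)\); here the supremum over \(x\in B_R\) in \(\mathrm{Tail}_a\) absorbs the \(x\)-dependence of \(a(x,y)\).
\end{itemize}
The tail term is then at most
\[
C(\cdots)\,\calT_{R/2,R}\int_{B_j}w_{j+1}\le C(\cdots)\,\frac{\calT_{R/2,R}}{h_R(k)}\int_{B_j}\frac{w_{j}\,h_R(k)}{1}.
\]
Choose \(k\ge (h_R)^{-1}(\calT_{R/2,R})\), so that \(\calT\le h_R(k)\). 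Then \(k\,h_R(k)=H_R(k)\) makes this term comparable to the modular, via \(|\{w_j>k2^{-j-2}\}|\).

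\textbf{Sobolev step (main obstacle).} We need to pass from the Gagliardo seminorms on the left to a gain of integrability for \(H_R(w_{j+1})\). This is where the two phases must be reconciled.
\begin{itemize}
\item In the regime \(s\le t\) with \(a^-_R\) small compared with the oscillation of \(a\) on \(B_R\), i.e.\ \(a^-_R\le [a]_\alpha R^\alpha\) (up to constants), the \(q\)-term is dominated by the \(p\)-term. This follows from \(tq\le sp+\alpha\) together with boundedness of \(u\) (\Cref{lem2-1}, whose bound enters \(\mathfrak D_u\)), since \(a^-_R k^q R^{-tq}\lesssim k^pR^{-sp}\). In this case we use only the fractional \(W^{s,p}\) Sobolev inequality.
\item Otherwise \(a\) is comparable to \(a^-_R\) on \(B_R\times B_R\), and we apply the Sobolev inequality phase by phase in \(W^{s,p}\) and \(W^{t,q}\). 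The case \(s>t\) with \(tq\le sp\) is handled by treating the \(q\)-phase as lower order with \(\|a\|_\infty\).
\end{itemize}
Combining with Chebyshev on the level gap \(k_{j+1}-k_j=k2^{-j-2}\) gives
\[
Z_{j+1}\le C\,b^{j}\Bigl(\tfrac{R}{r-\rho}\Bigr)^{n+q}Z_j^{1+\gamma}
\]
for some \(\gamma>0\) depending on the Sobolev gain. I expect checking this uniformly in both phases to be the delicate point.

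\textbf{Conclusion.} By \Cref{lem:fast-convergence}, \(Z_j\to0\), and hence \(\sup_{B_\rho}u_+\le k\), provided
\[
\fint_{B_r}H_R(u_+)\le C^{-1}\Bigl(\tfrac{r-\rho}{R}\Bigr)^{(n+q)/\gamma}H_R(k).
\]
So we take
\[
k:=(h_R)^{-1}(\calT_{R/2,R})+(H_R)^{-1}\Bigl(C\bigl(\tfrac{R}{r-\rho}\bigr)^{(n+q)/\gamma}\fint_{B_r}H_R(u_+)\Bigr).
\]
Finally, \(H_R\) satisfies \(H_R(\lambda\tau)\le\lambda^q H_R(\tau)\) for \(\lambda\ge1\), so \((H_R)^{-1}(\Lambda\sigma)\le\Lambda^{1/p}(H_R)^{-1}(\sigma)\) for \(\Lambda\ge1\). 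This pulls the factor outside the inverse and yields \eqref{eq:concentric-modular} with \(\kappa=(n+q)/(p\gamma)\).
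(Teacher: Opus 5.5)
Your outline is the paper's proof: a De Giorgi iteration between the concentric radii $\rho<r$, with $a^-_R$, $H_R$ and $h_R$ frozen at scale $R$. It applies \Cref{cac} on consecutive radii, bounds the tail by $\calT_{R/2,R}(u_+;x_0)$ and absorbs it using the level gap, gains $\gamma=1/\beta'$ from Sobolev, invokes \Cref{lem:fast-convergence}, and chooses a level $k$ carrying the factor $(R/(r-\rho))^{(n+q)\beta'/p}$. The skeleton closes, but some justifications are wrong as written.

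\textbf{Tail step.} The bound $|y-x_0|/|y-x|\le C2^jR/(r-\rho)$ fails for $x\in B_j$ and $y\notin B_{\tilde\rho_j}$, because $B_{\tilde\rho_j}\subset B_j$ and such points can be arbitrarily close. Also, the near part over $B_R\setminus B_{\tilde\rho_j}$ is not controlled by $Z_j$, which only sees $w_j$ on $B_j$. Neither claim is needed. The tail in \Cref{cac} is already written with $|x-x_0|$ in the kernel, its exterior radius $\tilde\rho_j$ is at least $R/2$, and $w_{j+1}\le u_+$ since $k_{j+1}\ge0$. By the symmetry of $a$ and the supremum over $B_R$ in $\mathrm{Tail}_a$, the whole exterior integral is therefore at most $\calT_{R/2,R}(u_+;x_0)$, uniformly in the interior variable. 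Note also that \Cref{cac} requires $k_{j+1}\le\|u\|_{L^\infty(B_R)}$; if this fails, $u<k$ a.e.\ in $B_R$ and there is nothing to prove.

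\textbf{Sobolev step.} The case distinction you flag as the main obstacle is unnecessary, and its lower-order branches are not free. Dominating $a^-_Rk^qR^{-tq}$ by $k^pR^{-sp}$ uses $\|u\|_{L^\infty(B_R)}$, which is not part of $\mathfrak D_u(B_R(x_0))$ when $sp>n$. Since the left-hand side of \Cref{cac} already carries the constant weight $a^-_R$, no comparability of $a$ with $a^-_R$ is needed. Fix $1<\beta<\min\{\beta_p^*,\beta_q^*\}$, where these are the critical ratios $n/(n-sp)$ and $n/(n-tq)$, or $\infty$. Apply the scaled fractional Sobolev inequality to the $(s,p)$-phase and, when $a^-_R>0$, to the $(t,q)$-phase, then add by Minkowski. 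In the second case $u\in W^{t,q}(B_R)$ because $a^-_R[u]^q_{W^{t,q}(B_R)}\le C\mathcal E(u;\Omega)<\infty$. This bounds $\bigl(\fint_{B_{\rho_{j+1}}}H_R(w_{j+1})^\beta\dd x\bigr)^{1/\beta}$ by the Caccioppoli energy plus $CH_R(k)Z_j$ in every case, including $s>t$. The restrictions $tq\le sp+\alpha$ and $tq\le sp$ enter only through \Cref{cac}.

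\textbf{Inverse scaling.} The bound $(H_R)^{-1}(\Lambda\sigma)\le\Lambda^{1/p}(H_R)^{-1}(\sigma)$ for $\Lambda\ge1$ follows from $H_R(\lambda\tau)\ge\lambda^pH_R(\tau)$ for $\lambda\ge1$. The upper bound $H_R(\lambda\tau)\le\lambda^qH_R(\tau)$ that you cite gives the reverse inequality.

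With these repairs your argument coincides with the paper's, with $\kappa=(n+q)\beta'/p$.
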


\begin{proof}
Throughout the proof, all balls are centered at \(x_0\). By
\Cref{lem2-1}, the minimizer \(u\) is locally bounded under the present
assumptions.


Let \(k>0\) be a number to be specified below, and set
\[
    Q:=\frac{R}{r-\rho}\ge2,\qquad
    r_j:=\rho+2^{-j}(r-\rho),\qquad
    k_j:=2(1-2^{-j-1})k,
\]
and
\[
    d_j:=k_{j+1}-k_j=2^{-j-1}k,\qquad
    w_j:=(u-k_j)_+,
\]
\[
    E_j:=B_{r_j}(x_0)\cap\{u>k_j\},\qquad
    Y_j:=\fint_{B_{r_j}(x_0)}H_R(w_j)\dd x,
    \qquad
    T:=\calT_{R/2,R}(u_+;x_0).
\]
Here $j\in \mathbb{N}\cup\{0\}$ and \(Q\ge2\) by \(r-\rho\le R/2\).  Moreover,
\[
r_j\downarrow\rho, \ \ k_j\uparrow2k\, \ \text{and}\ \  \ w_{j+1}\le w_j\le u_+.
\]
 If \(k_{j+1}\ge M_R^+(u;x_0)\) for some \(j\), then the conclusion is immediate because \(M_R^+(u;x_0)\le k_{j+1}<2k\).  We therefore consider
the remaining case, in which \(k_{j+1}<M_R^+(u;x_0)\) for every \(j\).

Define the interior energy by
\begin{align}
\mathscr E_{j+1}:={}&
\frac{1}{|B_{j+1}|}
\iint_{B_{j+1}\times B_{j+1}}
\frac{|w_{j+1}(x)-w_{j+1}(y)|^p}{|x-y|^{n+sp}}\dd y\!\dd x\notag\\
&+a_R^-\frac{1}{|B_{j+1}|}
\iint_{B_{j+1}\times B_{j+1}}
\frac{|w_{j+1}(x)-w_{j+1}(y)|^q}{|x-y|^{n+tq}}\dd y\!\dd x,
\label{E}
\end{align}
where \(B_j:=B_{r_j}(x_0)\). Applying \Cref{cac}, we obtain
\begin{align*}
   \mathscr E_{j+1}\le
    C\left(\frac{r_j}{r_j-r_{j+1}}\right)^{n+q}
    \bigl(I_j+J_j\bigr)\le C2^{j(n+q)}Q^{n+q}\bigl(I_j+J_j\bigr),
\end{align*}
where
\[
    I_j:=\fint_{B_j}
    \left(
        \frac{w_{j+1}^p}{r_j^{sp}}
        +a_R^-\frac{w_{j+1}^q}{r_j^{tq}}
    \right)\dd x
\]
and
\[
\begin{aligned}
    J_j:=\fint_{B_j}w_{j+1}(y)
    \int_{\R^n\setminus B_{j+1}}\bigg[
        \frac{w_{j+1}(x)^{p-1}}{|x-x_0|^{n+sp}}+a(x,y)\frac{w_{j+1}(x)^{q-1}}
                          {|x-x_0|^{n+tq}}
    \bigg]\dd x\!\dd y.
\end{aligned}
\]
Here we used
\[
 \frac{r_j}{r_j-r_{j+1}}\le 2^{j+1}\frac{R}{r-\rho}, \quad \frac{|B_j|}{|B_{j+1}|}
    =\left(\frac{r_j}{r_{j+1}}\right)^n\le2^n,
\]
since \(R/2\le r_{j+1}<r_j\le R\).

We next estimate \(I_j\) and \(J_j\). For every \(\tau\ge0\),
\begin{align*}
    \frac{\tau^p}{r_j^{sp}}
       +a_R^-\frac{\tau^q}{r_j^{tq}}
    &={}
    \left(\frac{R}{r_j}\right)^{sp}\frac{\tau^p}{R^{sp}}
    +\left(\frac{R}{r_j}\right)^{tq}
       a_R^-\frac{\tau^q}{R^{tq}}\\
    &\le2^{\max\{sp,tq\}}H_R(\tau).
\end{align*}
It follows that
\[
    I_j\le C\fint_{B_j}H_R(w_{j+1})\dd x.
\]
To estimate \(J_j\), first observe that \(k_{j+1}\ge0\), and hence \(w_{j+1}\le u_+\) on \(\R^n\).  For every \(y\in B_j\subset B_R\), the
symmetry of \(a\) and the inclusion \(\R^n\setminus B_{j+1}\subset\R^n\setminus B_{R/2}\) give
\begin{align*}
\int_{\R^n\setminus B_{j+1}}
    a(x,y)\frac{w_{j+1}(x)^{q-1}}{|x-x_0|^{n+tq}}\dd x &\le
\int_{\R^n\setminus B_{R/2}}
    a(y,x)\frac{u_+(x)^{q-1}}{|x-x_0|^{n+tq}}\dd x\\
&\le
\sup_{z\in B_R}
\int_{\R^n\setminus B_{R/2}}
    a(z,x)\frac{u_+(x)^{q-1}}{|x-x_0|^{n+tq}}\dd x.
\end{align*}
The corresponding \(p\)-phase integral is bounded directly by the first term in \(T\).  Therefore the inner exterior integral defining \(J_j\) is at most \(T\), uniformly for \(y\in B_j\), and
\[
    J_j\le C T\fint_{B_j}w_{j+1}\dd y.
\]
Hence the preceding estimates yield
\begin{equation}\label{ej}
    \mathscr E_{j+1}
    \le{}
   C2^{j(n+q)}Q^{n+q}
    \left[\fint_{B_{r_j}(x_0)}H_R(w_{j+1})\dd x +\calT_{R/2,R}(u_+;x_0)\fint_{B_{r_j}(x_0)}w_{j+1}\dd x
    \right].
\end{equation}

At every point where \(w_{j+1}>0\), we have \(w_j>d_j\), and
\[
    h_R(d_j)w_{j+1}
    \le h_R(d_j)w_j
    \le h_R(w_j)w_j=H_R(w_j).
\]
Consequently,
\[
    \fint_{B_{r_j}}w_{j+1}\dd x
    \le \frac{Y_j}{h_R(d_j)}.
\]
The first average in \eqref{ej} is at most \(Y_j\), since
\(w_{j+1}\le w_j\) and \(H_R\) is increasing.  As a result,
\begin{equation}\label{eq:iteration-energy}
    \mathscr E_{j+1}
    \le C(2^jQ)^{n+q}
    \left(1+\frac{T}{h_R(d_j)}\right)Y_j.
\end{equation}

We next apply the fractional Sobolev inequality to the left-hand side
of~\eqref{eq:iteration-energy}.
Define
\[
    \beta_p^*=
    \begin{cases}
        \dfrac{n}{n-sp},&sp<n,\\
        \infty,&sp\ge n,
    \end{cases}
    \qquad
    \beta_q^*=
    \begin{cases}
        \dfrac{n}{n-tq},&tq<n,\\
        \infty,&tq\ge n.
    \end{cases}
\]
Choose once and for all
\[
    1<\beta<\min\{\beta_p^*,\beta_q^*\},
    \qquad
    \beta':=\frac{\beta}{\beta-1}.
\]

For \((\sigma,m)=(s,p)\) or \((t,q)\), introduce
\[
    \mathcal G_{\sigma,m}(v;B_\varrho)
    :=\fint_{B_\varrho}\int_{B_\varrho}
       \frac{|v(x)-v(y)|^m}{|x-y|^{n+\sigma m}}\dd y\!\dd x,
\]
and recall the well-known Sobolev inequality
\[
\begin{aligned}
    \left(\fint_{B_\varrho}|v|^{m\beta}\dd x\right)^{1/\beta}
    \le C\left[
        \varrho^{\sigma m}\mathcal G_{\sigma,m}(v;B_\varrho)
        +\fint_{B_\varrho}|v|^m\dd x
    \right].
\end{aligned}
\]
Then set \(B:=B_{r_{j+1}}\), \(\varrho:=r_{j+1}\), and \(w:=w_{j+1}\).  Since
\(\varrho\le R\), division by \(R^{\sigma m}\) yields
\[
\begin{aligned}
    \frac1{R^{\sigma m}}
    \left(\fint_B|w|^{m\beta}\dd x\right)^{1/\beta}
    \le C\left[
        \mathcal G_{\sigma,m}(w;B)
        +\frac1{R^{\sigma m}}\fint_B|w|^m\dd x
    \right].
\end{aligned}
\]
The truncation map \(\tau\mapsto(\tau-k_{j+1})_+\) is one-Lipschitz, so
\(w\in W^{s,p}(B)\).  If \(a_R^->0\), then the lower kernel bound and
\(a(x,y)\ge a_R^-\) on \(B_R\times B_R\) give
\[
    a_R^-[u]_{W^{t,q}(B_R)}^q
    \le C\int_{B_R}\int_{B_R}
        a(x,y)|u(x)-u(y)|^qK_{tq}(x,y)\dd y\dd x<\infty.
\]
The local \(L^q\)-integrability follows from \cref{eq:sobolev-range} and the \((s,p)\)-Sobolev embedding.  Thus \(u\in W^{t,q}(B_R)\), and the one-Lipschitz truncation gives \(w\in W^{t,q}(B)\).  We may therefore apply the preceding scaled estimate to the \((s,p)\)-phase and, when \(a_R^->0\), to the \((t,q)\)-phase.

Assume first that \(a_R^->0\).  Minkowski's inequality gives
\begin{align*}
    \left(\fint_BH_R(w)^\beta\dd x\right)^{1/\beta}
    &\le
    \frac1{R^{sp}}
    \left(\fint_Bw^{p\beta}\dd x\right)^{1/\beta}
    +\frac{a_R^-}{R^{tq}}
    \left(\fint_Bw^{q\beta}\dd x\right)^{1/\beta}\\
    &\le C\left[
        \mathcal G_{s,p}(w;B)
        +a_R^-\mathcal G_{t,q}(w;B)
        +\fint_BH_R(w)\dd x
    \right]\\
    &=C\left[
        \mathscr E_{j+1}
        +\fint_{B_{r_{j+1}}}H_R(w_{j+1})\dd x
    \right].
\end{align*}
If \(a_R^-=0\), applying only the \((s,p)\)-estimate gives the same final
inequality with every \((t,q)\)-term omitted.  Finally,
\(B_{r_{j+1}}\subset B_{r_j}\),
\(w_{j+1}\le w_j\), and \(H_R\) is increasing; hence
\[
\begin{aligned}
    \fint_{B_{r_{j+1}}}H_R(w_{j+1})\dd x
    \le
    \frac{|B_{r_j}|}{|B_{r_{j+1}}|}
    \fint_{B_{r_j}}H_R(w_j)\dd x=\left(\frac{r_j}{r_{j+1}}\right)^nY_j
    \le2^nY_j.
\end{aligned}
\]
Consequently,
\begin{equation}\label{eq:modular-sobolev-explicit}
    \left(\fint_{B_{r_{j+1}}}
        H_R(w_{j+1})^\beta\dd x\right)^{1/\beta}
    \le C\bigl(\mathscr E_{j+1}+Y_j\bigr).
\end{equation}
By definition, \(w_{j+1}=0\) a.e.\ on
\(B_{r_{j+1}}\setminus E_{j+1}\). H\"older's inequality gives
\[
\begin{aligned}
    Y_{j+1}
    &=\fint_{B_{r_{j+1}}}
        H_R(w_{j+1})\chi_{E_{j+1}}\dd x\\
    &\le
    \left(\fint_{B_{r_{j+1}}}
        H_R(w_{j+1})^\beta\dd x\right)^{1/\beta}
    \left(\frac{|E_{j+1}|}{|B_{r_{j+1}}|}\right)^{1/\beta'}.
\end{aligned}
\]
Combining this with \eqref{eq:modular-sobolev-explicit} proves
\begin{equation}\label{eq:iteration-sobolev}
    Y_{j+1}
    \le C(\mathscr E_{j+1}+Y_j)
    \left(\frac{|E_{j+1}|}{|B_{r_{j+1}}|}\right)^{1/\beta'}.
\end{equation}
Here the level gap also gives
\begin{equation}\label{eq:iteration-support}
    \frac{|E_{j+1}|}{|B_{r_{j+1}}|}
    \le C\frac{Y_j}{H_R(d_j)},
\end{equation}
because \(w_j=u-k_j>d_j\) on \(E_{j+1}\), and therefore
\[
    H_R(d_j)\frac{|E_{j+1}|}{|B_{r_j}|}
    \le
    \fint_{B_{r_j}}H_R(w_j)\dd x=Y_j.
\]

For \(0<\lambda\le1\),
\[
    H_R(\lambda\tau)\ge\lambda^qH_R(\tau),\qquad
    h_R(\lambda\tau)\ge\lambda^{q-1}h_R(\tau).
\]
Consequently,
\[
    H_R(d_j)\ge2^{-q(j+1)}H_R(k),\qquad
    h_R(d_j)\ge2^{-(q-1)(j+1)}h_R(k).
\]
Using \((2^jQ)^{n+q}\ge1\) and substituting
\eqref{eq:iteration-energy} and \eqref{eq:iteration-support}
into~\eqref{eq:iteration-sobolev}, we obtain
\[
\begin{aligned}
    Y_{j+1}
    \le{}&
    C(2^jQ)^{n+q}
    \left(1+\frac{T}{h_R(d_j)}\right)
    [H_R(d_j)]^{-1/\beta'}
    Y_j^{1+1/\beta'}.
\end{aligned}
\]
Moreover, we have
\[
\begin{aligned}
    1+\frac{T}{h_R(d_j)}
    &\le
    2^{(q-1)(j+1)}
    \left(1+\frac{T}{h_R(k)}\right),\\
    [H_R(d_j)]^{-1/\beta'}
    &\le
    2^{q(j+1)/\beta'}[H_R(k)]^{-1/\beta'}.
\end{aligned}
\]
Set
\[
    \nu:=n+q,
    \qquad
    b:=2^{\nu+q-1+q/\beta'}>1.
\]
We obtain
\begin{equation}\label{eq:modular-recurrence}
    Y_{j+1}
    \le C_0b^jQ^\nu[H_R(k)]^{-1/\beta'}
    \left(1+\frac{T}{h_R(k)}\right)
    Y_j^{1+1/\beta'}.
\end{equation}

It remains to choose the level \(k\). Set
\[
    S:=\fint_{B_r(x_0)}H_R(u_+)\dd x,
    \qquad A_0:=(H_R)^{-1}(S),
    \qquad B_0:=(h_R)^{-1}(T).
\]
If \(S=0\) and \(T=0\), then \(u_+=0\) a.e.\ in \(B_r(x_0)\), and the
conclusion is immediate.  Otherwise set
\[
    \kappa_0:=\frac{\nu\beta'}p
\]
choose
\begin{equation}\label{eq:k-choice-explicit}
    k:=KQ^{\kappa_0}(A_0+B_0),
\end{equation}
where \(K\ge1\) will be fixed below.  Since \(h_R(\lambda\tau)\ge\lambda^{p-1}h_R(\tau)\) and \(H_R(\lambda\tau)\ge\lambda^pH_R(\tau)\) for \(\lambda\ge1\),
\begin{equation}\label{eq:k-lower-bounds}
    h_R(k)\ge (KQ^{\kappa_0})^{p-1}T,
    \qquad
    H_R(k)\ge (KQ^{\kappa_0})^pS.
\end{equation}
Indeed, the first inequality follows from \(k\ge KQ^{\kappa_0}B_0\) and \(h_R(B_0)=T\), and the second follows in the same way from \(H_R(A_0)=S\).  In particular,
\[
    \frac{T}{h_R(k)}
    \le (KQ^{\kappa_0})^{-(p-1)}\le1.
\]
Put
\[
    Z_j:=\frac{Y_j}{H_R(k)}.
\]
Dividing \eqref{eq:modular-recurrence} by \(H_R(k)\) and using \(T/h_R(k)\le1\), we obtain
\[
    Z_{j+1}\le C_0b^jQ^\nu Z_j^{1+1/\beta'},
\]
while \(w_0=(u-k)_+\le u_+\) and \eqref{eq:k-lower-bounds} give
\[
    Z_0\le\frac{S}{H_R(k)}
    \le K^{-p}Q^{-p\kappa_0}.
\]
Choose explicitly
\[
    K\ge
    \max\left\{1,
        \left(C_0^{\beta'}b^{(\beta')^2}\right)^{1/p}
    \right\}.
\]
Since \(p\kappa_0=\nu\beta'\), this choice gives
\[
    Z_0
    \le (C_0Q^\nu)^{-\beta'}b^{-(\beta')^2},
\]
which is exactly the threshold in \Cref{lem:fast-convergence} with
\(\gamma=1/\beta'\) and iteration coefficient \(C_0Q^\nu\).  Hence
\(Y_j\to0\).

For completeness, we justify the almost-everywhere conclusion.
Observe that \(B_\rho\subset B_{r_j}\), \(w_j\downarrow(u-2k)_+\), and
\(|B_{r_j}|/|B_\rho|\le2^n\).  Fatou's lemma therefore gives
\[
    \fint_{B_\rho}H_R((u-2k)_+)\dd x
    \le C\liminf_{j\to\infty}Y_j=0.
\]
The strict positivity of \(H_R(\tau)\) for \(\tau>0\) implies
\(u\le2k\) a.e.\ in \(B_\rho(x_0)\). Combining this with
\eqref{eq:k-choice-explicit}, and taking \(\kappa=\kappa_0\) and \(C=2K\),
proves \eqref{eq:concentric-modular}.
\end{proof}

We next establish the modular interpolation estimate that, together with
\Cref{lem3-1}, implies \Cref{thm1}.

\begin{lemma}[Modular interpolation]\label{lem:modular-interpolation}
Let \(0<R\le1\), \(R/2\le r\le R\), and \(\ell>0\).  Let \(v\ge0\) be measurable in \(B_R(x_0)\), with \(v\in L^\ell(B_R(x_0))\cap L^\infty(B_r(x_0))\).
Then, for every \(\delta\in(0,1]\),
\begin{equation}\label{eq:modular-interpolation}
\begin{aligned}
    (H_R)^{-1}
    \left(\fint_{B_r(x_0)}H_R(v)\dd x\right)
    \le{}&
    2\delta\|v\|_{L^\infty(B_r(x_0))}\\
    &+C_\ell(1+\delta^{-\mu})
    \left(\fint_{B_R(x_0)}v^\ell\dd x\right)^{1/\ell},
\end{aligned}
\end{equation}
where
\[
    \mu:=\max\left\{
        \frac{(p-\ell)_+}{\ell},
        \frac{(q-\ell)_+}{\ell}
    \right\},
\]
and \(C_\ell\) depends only on \(n,p,q,\ell\).
\end{lemma}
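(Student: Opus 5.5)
Set \(L:=\|v\|_{L^\infty(B_r(x_0))}\), \(S:=\fint_{B_r(x_0)}H_R(v)\dd x\) and \(N:=\bigl(\fint_{B_R(x_0)}v^\ell\dd x\bigr)^{1/\ell}\). The plan is to split the modular at the level \(\delta L\) and treat the two pieces separately. On \(\{v\le\delta L\}\) we have \(H_R(v)\le H_R(\delta L)\). On \(\{v>\delta L\}\) we compare each power with \(v^\ell\): for \(m\in\{p,q\}\) with \(m\ge\ell\), \(v^m=v^\ell v^{m-\ell}\le v^\ell L^{m-\ell}\). For \(m<\ell\) we use instead \(v^m\le v^\ell(\delta L)^{m-\ell}\) on the set where \(v>\delta L\). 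Since \(|B_R|/|B_r|\le2^n\), these bounds give
\[
S\le H_R(\delta L)+2^n\left[\frac{N^\ell\,\theta_p}{R^{sp}}+a_R^-\frac{N^\ell\,\theta_q}{R^{tq}}\right],
\]
where \(\theta_m:=L^{m-\ell}\) if \(m\ge\ell\) and \(\theta_m:=(\delta L)^{m-\ell}\) if \(m<\ell\).

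Next we need each term \(N^\ell\theta_m\) bounded by \((\delta L+C\delta^{-\mu}N)^m\) up to a constant. When \(m\ge\ell\), Young's inequality with exponents \(m/(m-\ell)\) and \(m/\ell\) gives
\[
N^\ell L^{m-\ell}=\bigl(\delta L\bigr)^{m-\ell}\bigl(\delta^{-(m-\ell)/\ell}N\bigr)^{\ell}\le(\delta L)^m+\bigl(\delta^{-(m-\ell)/\ell}N\bigr)^m.
\]
This is exactly where \(\delta^{-\mu}\) with \(\mu=(m-\ell)_+/\ell\) comes from. When \(m<\ell\), we first note that \(\theta_m\) is only used on \(\{v>\delta L\}\). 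If \(\delta L\ge N\), then \(N^\ell(\delta L)^{m-\ell}\le N^m\). If \(\delta L<N\), we discard the splitting and use directly \(v^m\le\max\{v,N\}^m\), or more simply \(L^m\le\delta^{-m}N^m\le\delta^{-\mu}\)-type bounds. The exact handling of this last case needs checking; it should be enough to say that then \(L\le N/\delta\), and also to use \(\fint v^m\le N^m\) by Jensen. So when \(\ell>m\), Jensen's inequality \(\fint_{B_r}v^m\le2^{n}(\fint_{B_R}v^\ell)^{m/\ell}\) applies, which is cleaner, and no splitting is needed for that phase.

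Combining the two phases, each phase is bounded by a constant times the corresponding phase of \(H_R\) evaluated at \(X:=\delta L+C(1+\delta^{-\mu})N\). Hence \(S\le C_1H_R(X)\) with \(C_1=C(n,p,q,\ell)\ge1\). The convenient Jensen route for \(m<\ell\) and the Young route for \(m\ge\ell\) can be applied phase by phase, because \(H_R\) is a sum.

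The main obstacle is inverting \(H_R\) with a controlled constant. For \(\lambda\ge1\) we have \(H_R(\lambda\tau)\ge\lambda^pH_R(\tau)\), so \(C_1H_R(X)\le H_R(C_1^{1/p}X)\). Then \((H_R)^{-1}(S)\le C_1^{1/p}X\). This still multiplies \(\delta L\) by \(C_1^{1/p}\), while the statement demands the coefficient \(2\delta\). To fix this, we rerun the argument with \(\delta\) replaced by \(\delta':=\delta/C_1^{1/p}\). Because \(\delta'^{-\mu}=C\delta^{-\mu}\), the coefficient on \(N\) only changes by a constant depending on \(n,p,q,\ell\). Tracking this, we also make sure the constant in front of \(H_R(\delta'L)\) in the first piece is at most \(2^q\), so that the final coefficient on \(\delta L\) stays at most 2. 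This normalization step is the delicate bookkeeping; everything else is the elementary Young and Jensen estimates above.
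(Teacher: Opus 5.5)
Your argument is correct once the hedged detours are dropped. It differs from the paper's proof in how the inverse \((H_R)^{-1}\) is handled.

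\textbf{The paper's route.} It never inverts \(H_R\) with a constant. Set \(P=(\fint_{B_r}v^p\dd x)^{1/p}\) and \(Q=(\fint_{B_r}v^q\dd x)^{1/q}\). Then \(H_R(P+Q)\ge P^p/R^{sp}+a_R^-Q^q/R^{tq}=\fint_{B_r}H_R(v)\dd x\), so the left-hand side of the lemma is at most \(P+Q\). The paper then bounds \(P\) and \(Q\) separately: by H\"older if \(\ell\ge m\), and by \(\fint_{B_r}v^m\le 2^nM^{m-\ell}A^\ell\) plus Young if \(\ell<m\) (its \(M,A\) are your \(L,N\)). The factor \(2\delta\) is simply one \(\delta\) per phase.

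\textbf{Your route.} You show \(S\le C_1H_R(X)\), invert using \(H_R(\lambda\tau)\ge\lambda^pH_R(\tau)\) for \(\lambda\ge1\), and absorb \(C_1^{1/p}\) by substituting \(\delta'=\delta C_1^{-1/p}\in(0,1]\).
\begin{itemize}
\item This is valid exactly because \(C_1\) is independent of \(\delta\). For instance, \(C_1=1+2^{n+1}\) works with \(X=\delta L+(1+\delta^{-\mu})N\). You should state this explicitly, since the whole rescaling rests on it.
\item The rescaling actually gives coefficient \(\delta\), not \(2\delta\), on \(\|v\|_{L^\infty(B_r(x_0))}\). So your closing remark about keeping some constant below \(2^q\) is unnecessary.
\end{itemize}

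\textbf{Comparison.} The paper's device is shorter and loses nothing under the inverse. It does, however, also have to push a constant onto the \(A\)-term: Young must be applied to \(2^{n/m}M^{1-\ell/m}A^{\ell/m}\) itself for the \(\delta M\) coefficient to stay \(\delta\). Your global rescaling plays the same role. Your approach needs only a lower growth bound for the modular and a \(\delta\)-independent comparison constant. It would therefore carry over to modulars that do not split into exact power norms, such as more phases or Orlicz-type functions.

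\textbf{Two loose ends.}
\begin{itemize}
\item The tentative subcase \(m<\ell\), \(\delta L<N\), handled via \(L^m\le\delta^{-m}N^m\), must not be used. The factor \(\delta^{-m}\) is not controlled by \(\delta^{-\mu}\); for \(\ell\ge q\) one has \(\mu=0\), and no blow-up in \(\delta\) is allowed at all. Jensen, which you settle on, is the right tool there.
\item For \(m\ge\ell\) the splitting at \(\delta L\) is superfluous, since \(v^m\le L^{m-\ell}v^\ell\) a.e.\ in \(B_r(x_0)\).
\end{itemize}
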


\begin{proof}
All balls in this proof are centered at \(x_0\).  Put
\[
    P:=\left(\fint_{B_r}v^p\dd x\right)^{1/p},
    \qquad
    Q:=\left(\fint_{B_r}v^q\dd x\right)^{1/q}.
\]
Since
\[
    H_R(P+Q)
    \ge \frac{P^p}{R^{sp}}+a_R^-\frac{Q^q}{R^{tq}}
    =\fint_{B_r}H_R(v)\dd x,
\]
monotonicity gives
\begin{equation}\label{eq:inverse-modular-moments}
    (H_R)^{-1}
    \left(\fint_{B_r}H_R(v)\dd x\right)
    \le P+Q.
\end{equation}
Set
\[
    M:=\|v\|_{L^\infty(B_r)},\qquad
    A:=\left(\fint_{B_R}v^\ell\dd x\right)^{1/\ell}.
\]
Because \(r\ge R/2\), \(|B_R|/|B_r|\le2^n\).  If \(\ell<m\), then
\[
    \fint_{B_r}v^m\dd x
    \le M^{m-\ell}\fint_{B_r}v^\ell\dd x
    \le2^nM^{m-\ell}A^\ell.
\]
If \(\ell\ge m\), H\"older's inequality instead gives
\[
    \left(\fint_{B_r}v^m\dd x\right)^{1/m}
    \le
    \left(\fint_{B_r}v^\ell\dd x\right)^{1/\ell}
    \le2^{n/\ell}A.
\]
Thus, for \(m=p,q\),
\[
    \left(\fint_{B_r}v^m\dd x\right)^{1/m}
    \le
    \begin{cases}
        C M^{1-\ell/m}A^{\ell/m},&\ell<m,\\
        CA,&\ell\ge m.
    \end{cases}
\]
For \(\ell<m\), Young's inequality with conjugate exponents \(m/(m-\ell)\) and \(m/\ell\) yields
\[
    M^{1-\ell/m}A^{\ell/m}
    \le
    \delta M+C\delta^{-(m-\ell)/\ell}A
    \qquad(\ell<m).
\]
Applying this estimate for \(m=p,q\) to \eqref{eq:inverse-modular-moments} proves \eqref{eq:modular-interpolation}.
\end{proof}

We shall use the following standard hole-filling lemma; see \cite{Giu03}.

\begin{lemma}\label{lem:hole-filling}
Let \(R>0\), and let \(\phi\) be nonnegative and bounded on \([R/2,R]\).
Suppose that, for \(R/2\le\rho<r\le R\),
\[
    \phi(\rho)
    \le
    \frac12\phi(r)
    +A\left(\frac{R}{r-\rho}\right)^\gamma+B
\]
with \(A,B\ge0\) and \(\gamma>0\).  Then
\[
    \phi(R/2)\le C_\gamma(A+B).
\]
\end{lemma}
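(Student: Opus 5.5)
The plan is the classical iteration argument for this kind of inequality (as in \cite{Giu03}). The idea is to apply the hypothesis along an increasing sequence of radii accumulating at \(R\). The gaps shrink geometrically, but slowly enough that the factor \(\frac12\) beats the growth of \(\left(\frac{R}{r-\rho}\right)^\gamma\).

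\textbf{Step 1 (radii).} Fix \(\lambda\in(0,1)\) with \(\lambda^{-\gamma}/2<1\), for instance \(\lambda:=2^{-1/(2\gamma)}\), so that \(\theta:=\lambda^{-\gamma}/2=2^{-1/2}\). Define
\[
\rho_0:=\frac R2,\qquad \rho_{i+1}:=\rho_i+(1-\lambda)\lambda^i\frac R2 .
\]
Then \(\rho_i\) increases strictly, all \(\rho_i\) lie in \([R/2,R)\), and \(\rho_i\uparrow R\). Moreover
\[
\frac{R}{\rho_{i+1}-\rho_i}=\frac{2}{(1-\lambda)\lambda^{i}} .
\]

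\textbf{Step 2 (iteration).} Apply the hypothesis with \(\rho=\rho_i\) and \(r=\rho_{i+1}\), which is admissible since \(R/2\le\rho_i<\rho_{i+1}\le R\). This gives
\[
\phi(\rho_i)\le\frac12\phi(\rho_{i+1})+A\left(\frac{2}{1-\lambda}\right)^\gamma\lambda^{-i\gamma}+B .
\]
Iterating from \(i=0\) to \(N-1\) yields
\[
\phi(R/2)\le 2^{-N}\phi(\rho_N)+\sum_{i=0}^{N-1}2^{-i}\left[A\left(\frac{2}{1-\lambda}\right)^\gamma\lambda^{-i\gamma}+B\right].
\]
Since \(2^{-i}\lambda^{-i\gamma}=\theta^i\) with \(\theta<1\), the sum is bounded uniformly in \(N\) by
\[
\frac{1}{1-\theta}\left(\frac{2}{1-\lambda}\right)^\gamma A+2B .
\]

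\textbf{Step 3 (passage to the limit).} Since \(\phi\) is bounded on \([R/2,R]\), we have \(2^{-N}\phi(\rho_N)\le 2^{-N}\sup_{[R/2,R]}\phi\to0\) as \(N\to\infty\). Letting \(N\to\infty\) gives \(\phi(R/2)\le C_\gamma(A+B)\), with
\[
C_\gamma:=\max\left\{\frac{1}{1-\theta}\left(\frac{2}{1-\lambda}\right)^\gamma,\,2\right\},
\]
which depends only on \(\gamma\).

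There is no genuine obstacle here. The only delicate point is the choice of \(\lambda\): the gaps must decay slowly enough that \(2^{-i}\lambda^{-i\gamma}\) stays summable, yet still sum to \(R/2\) so that every \(\rho_i\) remains in the admissible interval. The boundedness hypothesis on \(\phi\) is used exactly once, to discard the remainder term \(2^{-N}\phi(\rho_N)\).
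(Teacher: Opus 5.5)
Your proof is correct: the radii $\rho_i = R-\lambda^i R/2$ with $\lambda=2^{-1/(2\gamma)}$ make the weights $2^{-i}\lambda^{-i\gamma}=2^{-i/2}$ summable, and the boundedness of $\phi$ removes the remainder $2^{-N}\phi(\rho_N)$. The paper gives no proof of its own and simply cites Giusti's book; your argument is the standard iteration from that reference.
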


We now prove \Cref{thm1} by combining the preceding three lemmas.

\smallskip

\begin{proof}[\textbf{Proof of \Cref{thm1}}]
Under the present assumptions, \Cref{lem2-1} shows that the minimizer \(u\)
is locally bounded. Hence
\[
    \phi(\tau):=\sup_{B_\tau(x_0)}u_+,
    \qquad \frac R2\le\tau\le R,
\]
is finite and \(u_+\in L^\ell(B_R(x_0))\), as required in
\Cref{lem:modular-interpolation}.  Set
\[
    A:=\left(\fint_{B_R(x_0)}u_+^\ell\dd x\right)^{1/\ell},
    \qquad
    T:=(h_R)^{-1}\bigl(\calT_{R/2,R}(u_+;x_0)\bigr).
\]
Observe that both \(A\) and \(T\) are independent of the intermediate radii
introduced below.

Fix \(R/2\le\rho<r\le R\), and put
\[
    Q:=\frac{R}{r-\rho}.
\]
Since \(r-\rho\le R/2\), we have \(Q\ge2\).  Applying \Cref{lem3-1} to the concentric pair
\(B_\rho(x_0)\subset B_r(x_0)\) yields
\begin{equation}\label{eq:upper-proof-modular}
    \phi(\rho)
    \le C Q^\kappa
    \left[
        \bigl(H_R\bigr)^{-1}
        \left(\fint_{B_r(x_0)}H_R(u_+)\dd x\right)
        +T
    \right].
\end{equation}
The constants \(C\) and \(\kappa\) depend only on the data in
\(\mathfrak D_u(B_R(x_0))\).

Let \(0<\delta\le1\). Applying \Cref{lem:modular-interpolation} with
\(v=u_+\), we obtain
\[
    \bigl(H_R\bigr)^{-1}
    \left(\fint_{B_r(x_0)}H_R(u_+)\dd x\right)
    \le
    2\delta\,\phi(r)
    +C_\ell(1+\delta^{-\mu})A,
\]
where \(\mu\ge0\) is defined in \Cref{lem:modular-interpolation} and depends only on \(p,q,\ell\).  Substituting into~\eqref{eq:upper-proof-modular}, we obtain
\begin{equation}\label{eq:upper-proof-preiteration}
    \phi(\rho)
    \le
    C Q^\kappa
    \left[2\delta\phi(r)+C_\ell(1+\delta^{-\mu})A+T\right].
\end{equation}

Now we choose
\[
    \delta:=\frac{1}{4C Q^\kappa}.
\]
Since \(Q\ge2\) and \(C\ge1\), this choice belongs to \((0,1]\), and the coefficient of \(\phi(r)\) in~\eqref{eq:upper-proof-preiteration} becomes \(1/2\).  Furthermore,
\[
    C Q^\kappa(1+\delta^{-\mu})
    \le C Q^\kappa\bigl(1+(4C Q^\kappa)^\mu\bigr)
    \le C Q^{\kappa(1+\mu)}.
\]
The tail term satisfies \(CQ^\kappa T\le CQ^\gamma T\) for every
\(\gamma\ge\kappa\).  Hence, with
\[
    \gamma:=\kappa(1+\mu),
\]
we arrive at
\begin{equation}\label{eq:upper-proof-hole-filling}
    \phi(\rho)
    \le \frac12\phi(r)
       +C_\ell\left(\frac{R}{r-\rho}\right)^\gamma(A+T),
\end{equation}
for every \(R/2\le\rho<r\le R\).  All constants depend only on \(\ell\)
and \(\mathfrak D_u(B_R(x_0))\).

Finally, apply \Cref{lem:hole-filling}
to~\eqref{eq:upper-proof-hole-filling} with \(B=0\). Since \(\phi\) is bounded
on \([R/2,R]\), we obtain
\[
    \phi(R/2)\le C_\ell(A+T).
\]
Recalling the definitions of \(\phi\), \(A\), and \(T\), this is precisely~\eqref{eq:local-bound}.
\end{proof}

We conclude the section with the proofs of \Cref{thm:harnack-tail} and
\Cref{cor:harnack-single-tail}, obtained by combining the weak Harnack
estimate in \Cref{pro1} with the upper estimate in \Cref{thm1}.

\smallskip

\begin{proof}[\textbf{Proof of \Cref{thm:harnack-tail}}]
Let \(\varepsilon\in(0,1)\) be the exponent supplied by
\Cref{pro1}.  We apply \Cref{thm1} with the
reference radius replaced by \(R/2\) and with \(\ell=\varepsilon\). Since
\(u\ge0\) a.e.\ in \(B_{2R}(x_0)\), we have
\(u_+=u\) a.e.\ in \(B_{R/2}(x_0)\). Therefore
\begin{equation}\label{eq:harnack-upper-step}
\begin{aligned}
    \sup_{B_{R/4}(x_0)}u
    \le C\left(\fint_{B_{R/2}(x_0)}u^\varepsilon\dd x\right)^{1/\varepsilon}+C(h_{R/2})^{-1}
    \bigl(\calT_{R/4,R/2}(u_+;x_0)\bigr).
\end{aligned}
\end{equation}
The constant in~\eqref{eq:harnack-upper-step} depends on \(\varepsilon\) and on the local data associated with \(B_{R/2}(x_0)\).  These local data are controlled by \(\mathfrak D_u(B_{2R}(x_0))\).  Moreover, \(\varepsilon\) itself depends only on \(\mathfrak D_u(B_{2R}(x_0))\) by
\Cref{pro1}.  Hence the constant in~\eqref{eq:harnack-upper-step} has the dependence asserted in \Cref{thm:harnack-tail}.

On the other hand, \Cref{pro1} gives
\begin{equation}\label{eq:harnack-lower-step}
    \left(\fint_{B_{R/2}(x_0)}u^\varepsilon\dd x\right)^{1/\varepsilon}
    \le
    C\left[
        \inf_{B_{R/2}(x_0)}u
        +(h_{2R})^{-1}\bigl(\calT_{2R}(u_-;x_0)\bigr)
    \right].
\end{equation}
Substituting~\eqref{eq:harnack-lower-step}
into~\eqref{eq:harnack-upper-step} yields
\begin{align*}
    \sup_{B_{R/4}(x_0)}u
    \le C\bigg[
        \inf_{B_{R/2}(x_0)}u
        +(h_{R/2})^{-1}
          \bigl(\calT_{R/4,R/2}(u_+;x_0)\bigr)+(h_{2R})^{-1}
          \bigl(\calT_{2R}(u_-;x_0)\bigr)
    \bigg].
\end{align*}
Finally, because \(B_{R/4}(x_0)\subset B_{R/2}(x_0)\),
\[
    \inf_{B_{R/2}(x_0)}u
    \le \inf_{B_{R/4}(x_0)}u.
\]
Replacing the smaller quantity on the right-hand side by the larger one yields~\eqref{eq:harnack-tail}.
\end{proof}


\begin{proof}[\textbf{Proof of \Cref{cor:harnack-single-tail}}]
For brevity, set
\[
    A:=\calT_{R/4,R/2}(u_+;x_0),
    \qquad
    B:=\calT_{2R}(u_-;x_0),
    \qquad
    T:=\calT_{R/4,2R}(|u|;x_0).
\]
The inclusion \(B_{R/2}(x_0)\times B_{R/2}(x_0)\subset B_{2R}(x_0)\times B_{2R}(x_0)\) implies \(a_{R/2}^-\ge a_{2R}^-\).  Thus, for every $\tau\ge0$,
\[
\begin{aligned}
    h_{R/2}(\tau)
    &=\frac{\tau^{p-1}}{(R/2)^{sp}}
      +a_{R/2}^-\frac{\tau^{q-1}}{(R/2)^{tq}}\\
    &\ge
      \frac{\tau^{p-1}}{(2R)^{sp}}
      +a_{2R}^-\frac{\tau^{q-1}}{(2R)^{tq}}
      =h_{2R}(\tau).
\end{aligned}
\]
Since both functions are strictly increasing bijections of
\([0,\infty)\), their inverses satisfy
\[
    (h_{R/2})^{-1}(S)\le (h_{2R})^{-1}(S)
    \qquad\text{for every }S\ge0.
\]
The monotonicity of \((h_{2R})^{-1}\) therefore yields
\begin{equation}\label{eq:inverse-tail-comparison}
\begin{aligned}
    &\quad(h_{R/2})^{-1}(A)+(h_{2R})^{-1}(B)\\
    &\le
    (h_{2R})^{-1}(A)+(h_{2R})^{-1}(B)
    \le 2(h_{2R})^{-1}(A+B).
\end{aligned}
\end{equation}

The sign assumption in \Cref{thm:harnack-tail} gives \(u_-=0\) a.e.\ in
\(B_{2R}(x_0)\). Hence enlarging the exterior integration region from
\(\R^n\setminus B_{2R}(x_0)\) to \(\R^n\setminus B_{R/4}(x_0)\) adds only a region on which \(u_-=0\), so
\[
    \calT_{2R}(u_-;x_0)
    =\calT_{R/4,2R}(u_-;x_0),
\]
where the interior reference ball is \(B_{2R}(x_0)\) on both sides. Moreover, enlarging the ball over which the weighted \(q\)-tail supremum
is taken gives
\[
    \calT_{R/4,R/2}(u_+;x_0)
    \le \calT_{R/4,2R}(u_+;x_0).
\]

To verify the resulting signed-tail comparison directly, define
\[
\begin{aligned}
    P(v)
    &:={}
      \int_{\R^n\setminus B_{R/4}(x_0)}
      \frac{|v(y)|^{p-1}}{|x_0-y|^{n+sp}}\dd y,\\
    Q(v;x)
    &:={}
      \int_{\R^n\setminus B_{R/4}(x_0)}
      a(x,y)\frac{|v(y)|^{q-1}}{|x_0-y|^{n+tq}}\dd y,
      \qquad x\in B_{2R}(x_0).
\end{aligned}
\]
The preceding two observations imply
\[
    A+B
    \le P(u_+)+P(u_-)+
       \sup_{x\in B_{2R}(x_0)}Q(u_+;x)
       +\sup_{x\in B_{2R}(x_0)}Q(u_-;x).
\]
Since \(|u|^{m-1}=u_+^{m-1}+u_-^{m-1}\) for \(m=p,q\), we have
\(P(u_+)+P(u_-)=P(|u|)\) and
\(Q(u_+;x)+Q(u_-;x)=Q(|u|;x)\). It follows that
\begin{equation}\label{eq:signed-tail-comparison}
\begin{aligned}
    A+B
    &\le P(|u|)+2\sup_{x\in B_{2R}(x_0)}Q(|u|;x)\\
    &\le2\calT_{R/4,2R}(|u|;x_0)=2T.
\end{aligned}
\end{equation}

Since \(q\ge p\), for every \(\lambda\ge1\) and \(\tau\ge0\),
\[
\begin{aligned}
    h_{2R}(\lambda\tau)=\lambda^{p-1}\frac{\tau^{p-1}}{(2R)^{sp}}
      +a_{2R}^-\lambda^{q-1}\frac{\tau^{q-1}}{(2R)^{tq}}
    \ge \lambda^{p-1}h_{2R}(\tau).
\end{aligned}
\]
Taking \(\tau=(h_{2R})^{-1}(S)\) and
\(\lambda=2^{1/(p-1)}\), and then applying the increasing inverse, gives
\[
    (h_{2R})^{-1}(2S)
    \le 2^{1/(p-1)}(h_{2R})^{-1}(S)
    \qquad\text{for every }S\ge0.
\]
Combining this with \eqref{eq:inverse-tail-comparison} and \eqref{eq:signed-tail-comparison}, we obtain the explicit estimate
\[
\begin{aligned}
    &\quad (h_{R/2})^{-1}(A)+(h_{2R})^{-1}(B)\\
    &\le2(h_{2R})^{-1}(A+B)
     \le2(h_{2R})^{-1}(2T)
     \le2^{1+1/(p-1)}(h_{2R})^{-1}(T).
\end{aligned}
\]
Substitution of this bound into \eqref{eq:harnack-tail} proves \eqref{eq:harnack-single-tail}. 
\end{proof}

\subsection*{Acknowledgments}
This work was supported by the National Natural Science Foundation of China
(Nos.~12471128 and 12301245) and the Natural Science Foundation of
Heilongjiang Province (No.~YQ2025A002).

\subsection*{Conflict of interest}
The authors declare that they have no conflict of interest.

\subsection*{Data availability}
Data sharing is not applicable to this article because no datasets were
generated or analyzed during the current study.

\end{document}